\newtheorem{theorem}{Theorem}[section]
\newtheorem{lemma}[theorem]{Lemma}
\newtheorem{corollary}[theorem]{Corollary}
\theoremstyle{definition}
\theoremstyle{remark}
\numberwithin{equation}{section}
\newcommand{\NN}{\mathbb{N}}
\newcommand{\CC}{\mathbb {C}}
\begin{document}
\setcounter{page}{1}
\title[ Topological and dynamical properties of  composition operators]{ Topological and dynamical properties of composition operators }
\author[Tesfa  Mengestie]{Tesfa  Mengestie }
\address{ Mathematics Section \\
Western Norway University of Applied Sciences\\
Klingenbergvegen 8, N-5414 Stord, Norway}
\email{Tesfa.Mengestie@hvl.no}
\author [Werkaferahu Seyoum]{Werkaferahu Seyoum}
\address{Department of Mathematics,
Addis Ababa University, Ethiopia}
\email{Werkaferahus@gmail.com}
\thanks{The research of the second  author  is partially supported by ISP project, AAU}
\subjclass[2010]{Primary: 47B32, 30H20; Secondary: 46E22, 46E20, 47B33 }
 \keywords{Generalized Fock spaces,  Bounded, Compact,  Composition, Normal, Unitary, Cyclic,   Supercyclic, Connected, Isolated }
\begin{abstract}
We study various  properties of   composition operators acting between generalized Fock spaces  $\mathcal{F}_\varphi^p$  and $\mathcal{F}_\varphi^q$ with weight functions $\varphi$ grow faster than the classical Gaussian weight function  $\frac{1}{2}|z|^2$ and satisfy some mild smoothness conditions.  We have shown  that if  $p\neq q,$  then the composition operator   $C_\psi: \mathcal{F}_\varphi^p \to  \mathcal{F}_\varphi^q $ is bounded if and only if it is compact. This result  shows a significance difference with the analogous  result for the case  when  $C_\psi$ acts  between the classical  Fock spaces or generalized Fock spaces  where the weight functions grow  slower than   the Gaussian weight  function. We further  described  the Schatten $\mathcal{S}_p(\mathcal{F}_\varphi^2)$ class, normal, unitary, cyclic and supercyclic composition operators. As an application, we characterized the compact differences,  the isolated and essentially isolated  points, and  connected components of the space of the  operators under  the operator norm topology.
\end{abstract}
\maketitle

\section{Introduction}
For given holomorphic mappings $\psi$  and $f$ on the complex plane $\CC$, we define the composition operator  induced by $\psi$   as
$C_\psi f= f(\psi).$
 Composition operators    have been extensively studied on various  spaces of holomorphic functions  over several  settings in the past  many  years. It is rather  difficult   to give a comprehensive list of related   works on the subject now.  For an overview in the framework of  Fock  spaces, which we are  interested in, one may consult the materials for example  in \cite{CMA, CCK, GK, LGR, TM5} and  the references therein. On the other hand, over the unit  disc of the complex plane or the unit ball  in $\CC^n$, the monographs in \cite{CO,JA, SH} provide a  comprehensive expositions specially on the early developments of the area.
  The study of composition operators has continued to gain voluminous interest partly because it finds itself at the interface of both  operator and function theories.

 In the classical Fock spaces setting, the  boundedness and compactness properties of composition operators were studied  for example in \cite{CMA, CCK, TM, TM0}. On the other hand, when the weight function generating the generalized Fock spaces grows slower than the classical Gaussian weight function, a thorough  look into the proof of Proposition~2.1 of \cite{TM5} shows that the forms of the symbol $\psi$ inducing  bounded and compact $C_\psi$ are  just like that of the classical case. A similar result can be also read in \cite{CCK,GK,TM4} where the form of the operator is discussed on Fock--Sobolev spaces which are typical examples of generalized Fock spaces with weight function growing  slower than the Gaussian weight function.   A natural question is what happens to these properties when the weight function grows faster than the Gaussian function.  The aim of this work is taking further the study of  the  operators on such spaces, and answer these and other related topological and dynamical  questions.  It turns out that while  the dynamical structures of the operators behave like that of the classical setting, the faster growth of the weight function results in a poorer structure in the forms of the  symbols inducing bounded composition operators acting between two  generalized Fock spaces.

We begin by setting  the growth  and smoothness conditions for the  generating weight function.  Let $\varphi:[0, \infty) \to [0, \infty)$ be a twice continuously differentiable function. We extend $\varphi$ to the whole complex plane by setting $\varphi(z)= \varphi(|z|)$.    We further assume that its  Laplacian,
 $\Delta \varphi $,   is positive and set $ \tau(z)\overset{\footnotemark}{\simeq} (\Delta\varphi(z))^{-1/2}$ when $ |z|\geq 1,$ and  $ \tau(z)\simeq 1$ whenever $ 0\leq|z|<1$,\footnotetext{The  notation $U(z)\lesssim V(z)$ (or
equivalently $V(z)\gtrsim U(z)$) means that there is a constant
$C$ such that $U(z)\leq CV(z)$ holds for all $z$ in the set of a
question. We write $U(z)\simeq V(z)$ if both $U(z)\lesssim V(z)$
and $V(z)\lesssim U(z)$.}  where $\tau$ is a radial differentiable function satisfying the admissibility conditions
  \begin{align}
  \label{assumption}
  \lim_{r \to \infty} \tau(r)= 0\ \ \text{and} \ \   \lim_{r\to \infty} \tau'(r)=0,
  \end{align} and there exists a constant $C>0$ such that $\tau(r)r^C$ increases for large $r$ or
 \begin{align*}
 \lim_{r\to \infty} \tau'(r)\log\frac{1}{\tau(r)}=0.
 \end{align*}
We may note   that
 there are many  concrete examples of weight functions $\varphi$ that satisfy the above smoothness and admissibility   conditions. The  power functions $\varphi_\alpha(r)= r^\alpha, \ \alpha>2$,   the exponential functions such as $\varphi_\beta(r)= e^{\beta r},\ \ \beta >0 $,  and  the  supper exponential functions $ \varphi(r)= e^{e^{ r}}$  are all typical examples  of such  weight functions.

Having set forth  the conditions on $\varphi$, we may now define  the associated generalized  Fock spaces $\mathcal{F}_\varphi^p$  as  spaces  consisting of all entire functions $f$ for which
\begin{align*}
\|f\|_{p}^p= \int_{\CC} |f(z)|^p
e^{-p\varphi(z)} dA(z) <\infty,
\end{align*} where   $dA$ denotes the
usual Lebesgue area  measure on $\CC$.
These spaces have been studied in various contexts in the past years for instance  in \cite{Borch, Olivia, TM3}.

It has been  known that the Laplacian $\Delta\varphi$ of the weight function $\varphi$ plays a significant role  in the study of various operators on generalized Fock spaces. Now it   is found that  the structure of the symbol $\psi $ inducing  a bounded map $C_\psi$ is determined based on the growth of  $\Delta\varphi$. More specifically, if  $C_\psi$ acts between two different generalized Fock spaces and $ \Delta\varphi(z) \to \infty$ as $|z| \to \infty$, then one of  our main result shows that  $C_\psi$ experiences a poorer boundedness structure than the case when the Laplacian is uniformly bounded. As a consequence of the growth of the Laplacian, we will see that the space of all  bounded  composition operators $ C_\psi:  \mathcal{F}_{\varphi}^p \to \mathcal{F}_\varphi^q, \ \ \ p\neq q$  is connected  under the operator norm topology. On the other hand, as will be seen in Subsection~\ref{dynamical}, the dynamical properties such as cyclicity, hypercyclicity and   supercyclicity show no dependency on the growth of the Laplacian as in the classical setting.
\section{main results}\label{mainresults}
In this section, we  state the  main results  of this note and  defer  their proofs for  Section \ref{proofs}.
Our first  main result  describes the bounded and compact composition operators acting between the  spaces.
\begin{theorem}\label{thm1} Let $0<p,  q < \infty$ and $\psi $ be a nonconstant holomorphic map on the complex plane  $\CC$. If
\begin{enumerate}
\item  $p\neq q,$ then the following statements are equivalent.
\begin{enumerate}
\item $C_\psi:  \mathcal{F}_{\varphi}^p \to \mathcal{F}_\varphi^q$ is bounded;
\item $C_\psi:  \mathcal{F}_{\varphi}^p  \to \mathcal{F}_\varphi^q$ is compact;
\item $ \psi(z) = az+b$ for some complex numbers $a$ and $b$ such that $|a|<1$.
\end{enumerate}
\item   $p=q$, then  $C_\psi:  \mathcal{F}_{\varphi}^p \to \mathcal{F}_\varphi^q$ is
\begin{enumerate}
\item bounded  if and only if $ \psi(z) = az+b$ for some complex numbers $a$ and $b$ such that $|a|\leq 1$,  and $b=0$ whenever $|a|=1$.
\item compact if and only if $ \psi(z) = az+b$ for some complex numbers $a$ and $b$ such that $|a|<1$.
\end{enumerate}
\end{enumerate}
    \end{theorem}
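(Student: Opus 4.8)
The plan is to combine the standard pointwise and reproducing-kernel estimates for the spaces $\mathcal F_\varphi^p$ (see \cite{Borch, Olivia, TM3}) with the structural consequences of the hypothesis $\Delta\varphi(z)\to\infty$: namely that $\log\tfrac1{\tau(r)}=o(\varphi(r))$, that $\log\tfrac1{\tau(r)}=o\bigl(\varphi'(r)\bigr)$, and that $\varphi$ grows fast enough that $\varphi^{-1}\bigl(C\varphi(r)+C\bigr)\lesssim r$ for large $r$. Throughout I use $K(z,z)\simeq e^{2\varphi(z)}\tau(z)^{-2}$, $\|K_w\|_p\simeq e^{\varphi(w)}\tau(w)^{-(2-2/p)}$, the pointwise estimate $|f(z)|\lesssim\|f\|_p\,e^{\varphi(z)}\tau(z)^{-2/p}$, and the fact that the normalised kernels $\widehat k_{w,p}:=K_w/\|K_w\|_p$ satisfy $\|\widehat k_{w,p}\|_p\simeq1$ and converge to $0$ uniformly on compact sets as $|w|\to\infty$.

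I would first dispose of the sufficiency directions. If $\psi(z)=az+b$ with $|a|<1$, then the change of variables $w=\psi(z)$ gives
\[
\|C_\psi f\|_q^q=\frac1{|a|^2}\int_\CC\bigl(|f(w)|e^{-\varphi(w)}\bigr)^q e^{q h(w)}\,dA(w),\qquad h(w)=\varphi(w)-\varphi\Bigl(\tfrac{w-b}{a}\Bigr).
\]
Since $|a|<1$ forces $|(w-b)/a|\ge\lambda|w|$ with some $\lambda>1$ for $|w|$ large, one has $h(w)\le\varphi(|w|)-\varphi(\lambda|w|)\le-c\,|w|\varphi'(|w|)\to-\infty$; in particular $\tau(w)^{-s}e^{q h(w)}\to0$ for every $s$ and $e^{sh}\in L^1(dA)$ for every $s>0$. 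Splitting the integral at $\{|w|=R\}$ and estimating the factor $(|f|e^{-\varphi})^{q-p}$ by the pointwise bound when $q\ge p$, or applying H\"older's inequality with exponents $p/q,\,p/(p-q)$ when $q<p$, yields $\|C_\psi f\|_q\lesssim\|f\|_p$; moreover the tail over $\{|w|>R\}$ is uniformly small over bounded sets while the part over $\{|w|\le R\}$ vanishes along any bounded sequence tending to $0$ uniformly on compacta, so $C_\psi$ is in fact compact. This gives (c)$\Rightarrow$(b) of (i), the backward direction of (ii)(b), and the boundedness half of the backward direction of (ii)(a) when $|a|<1$. When $|a|=1$ and $b=0$, the same change of variables together with the radiality of $\varphi$ gives $\|C_\psi f\|_p=\|f\|_p$, so $C_\psi$ is an isometry of $\mathcal F_\varphi^p$, which completes the backward direction of (ii)(a); and (b)$\Rightarrow$(a) of (i) is trivial.

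The necessity directions run on the kernel test. If $C_\psi:\mathcal F_\varphi^p\to\mathcal F_\varphi^q$ is bounded, then $\|C_\psi\widehat k_{w,p}\|_q\lesssim1$ for all $w$; choosing $w=\psi(\zeta)$ and applying the pointwise bound in $\mathcal F_\varphi^q$ at the point $\zeta$, together with $K(\psi(\zeta),\psi(\zeta))/\|K_{\psi(\zeta)}\|_p\simeq e^{\varphi(\psi(\zeta))}\tau(\psi(\zeta))^{-2/p}$, gives
\[
\varphi(\psi(\zeta))\le\varphi(\zeta)+C\Bigl(\log\tfrac1{\tau(\psi(\zeta))}+\log\tfrac1{\tau(\zeta)}+1\Bigr)
\]
for all large $|\zeta|$. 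Because $\log\tfrac1\tau=o(\varphi)$, this forces $\varphi(\psi(\zeta))\lesssim\varphi(\zeta)$, hence $|\psi(\zeta)|\lesssim|\zeta|$, so $\psi$ is entire of linear growth and $\psi(z)=az+b$. Re-inserting this and choosing $\zeta$ with $|\psi(\zeta)|=|a|\,|\zeta|+|b|$: when $|a|>1$ the left-hand side then exceeds $\varphi(|\zeta|)$ by at least $c\,|\zeta|\varphi'(|\zeta|)\to\infty$, contradicting the $O\bigl(\log\tfrac1\tau\bigr)$ error term; hence $|a|\le1$. If $|a|=1$, choosing $\zeta$ so that $|\psi(\zeta)|=|\zeta|+|b|$ reduces the inequality to $|b|\,\varphi'(|\zeta|)\lesssim\log\tfrac1{\tau(|\zeta|)}$, which is impossible unless $b=0$. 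This already yields the forward direction of (ii)(a), and, since a nontrivial isometry can never be compact on the infinite-dimensional $\mathcal F_\varphi^p$, the forward direction of (ii)(b). For part (i) it remains to rule out $|a|=1,\ b=0$, i.e.\ to show that the inclusion $\mathcal F_\varphi^p\hookrightarrow\mathcal F_\varphi^q$ is unbounded whenever $p\neq q$: when $p<q$ a single test function suffices, since $\|\widehat k_{w,p}\|_q\simeq\tau(w)^{2/q-2/p}\to\infty$; when $p>q$ I would take a maximal $\tau$-separated family $w_1,\dots,w_N$ on the circle $|w|=R$, so that $N\simeq R^2\tau(R)^{-2}$, and test with $f=\sum_j\widehat k_{w_j,p}$, using the atomic-decomposition bound $\|f\|_p\lesssim N^{1/p}$ and the almost-orthogonality of the well-separated bumps $\widehat k_{w_j,p}\circ\psi$ to get $\|C_\psi f\|_q\gtrsim\bigl(N\,\tau(R)^{2-2q/p}\bigr)^{1/q}$, whence $\|C_\psi\|\gtrsim R^{2/q-2/p}\to\infty$. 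Thus $|a|<1$, and part (i) follows.

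I expect the main obstacle to be the necessity half: extracting from the admissibility conditions on $\tau$ the comparisons $\log\tfrac1\tau=o(\varphi)$, $\varphi'\gg\log\tfrac1\tau$ and $\varphi^{-1}(C\varphi(r))\lesssim r$ that upgrade the kernel test to the rigidity ``$\psi$ affine with the stated restriction on $|a|$'', together with the atomic-decomposition and almost-orthogonality estimates needed to make the many-bump test for $p>q$ in part (i) rigorous. Once these weight and frame estimates are in place, the remainder is the routine Fock-space toolkit.
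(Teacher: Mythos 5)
Your outline is essentially sound and reaches all the right conclusions, but it runs on a genuinely different engine from the paper's. The paper never tests with reproducing kernels for general $p$: it rewrites $\|C_\psi f\|_q^q$ as $\int_\CC |f|^q\,d\mu_{(\psi,q)}$ for the pullback measure $\mu_{(\psi,q)}$ and then invokes the Carleson-embedding theorem of Constantin--Pel\'aez (Theorem~1 of \cite{Olivia}), which characterizes boundedness/compactness of $i_d:\mathcal F_\varphi^p\to L^q(\mu_{(\psi,q)})$ by an explicit integral condition (a supremum over $\delta\tau$-disks when $p<q$, and membership of a certain function $\mathcal T$ in $L^{p/(p-q)}$ when $p>q$). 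Necessity then falls out of the local pointwise estimate \eqref{pointwise} applied to that condition, and the delicate case $p>q$, $|a|=1$ is killed in one line because $\int_\CC \tau(z)^{-2p/(p-q)}\tau(z/a)^{2p/(p-q)}\,dA(z)=\int_\CC dA=\infty$. What the paper's route buys is that the two hardest pieces of your plan become unnecessary: you never need $L^p$-norm asymptotics of $K_w$ for $p\neq 2$, and you never need an atomic decomposition.

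Those two pieces are where your proposal has real exposure. First, the estimate $\|K_w\|_p\simeq e^{\varphi(w)}\tau(w)^{-(2-2/p)}$ and the off-diagonal control of $K_w$ needed to make $\widehat k_{w,p}$ a usable test family are \emph{not} available for these fast-growing weights -- the paper stresses that even $K_w$ itself is not explicitly known and only the $p=2$ norm asymptotic \eqref{asymptotic} is established; this is precisely why it substitutes the Borichev--Dhuez--Kellay/Constantin--Pel\'aez functions $f_{(w,R)}$, which satisfy \eqref{test00}--\eqref{test} and do everything you ask of $\widehat k_{w,p}$. Your necessity argument survives verbatim under that substitution, so this is repairable rather than fatal. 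Second, the many-bump test for the case $p>q$, $|a|=1$ (maximal $\tau$-separated family on $|w|=R$, $\|\sum_j \widehat k_{w_j,p}\|_p\lesssim N^{1/p}$, almost-orthogonality for the lower bound in $\mathcal F_\varphi^q$) is a substantial frame-theoretic input that you assert but do not prove; the arithmetic at the end is right, but the upper and lower bounds both require the off-diagonal decay exponent $R^2/2$ in \eqref{test00} to be summed over the lattice, and none of this machinery is set up in the paper. Either prove that decomposition or, more economically, replace the whole step by the paper's divergent-integral computation (or by the known fact that $\mathcal F_\varphi^p\setminus\mathcal F_\varphi^q\neq\emptyset$ for $p\neq q$, Corollary~2 of \cite{Olivia}, which already shows the inclusion cannot be bounded). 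The weight comparisons you flag ($\log\frac1\tau=o(\varphi)$, $\varphi'\gg\log\frac1\tau$) are genuinely needed in your version because the kernel test produces $O(\log\frac1\tau)$ error terms; the paper faces the same issue implicitly and resolves it the same way, so this is not a point of divergence so much as something both arguments should state explicitly.
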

   Part (i) of the result  shows a significance difference with the corresponding  result for the case  when  $C_\psi$ acts  between  the classical Fock spaces or generalized Fock spaces  where the weight function $\varphi$ grows  slower than  the  Gaussian weight function \cite{CMA,CCK,GK,TM4,TM5,TM}. On the other hand, part (ii) of the result  simply extends the classical results with the same form. It is interesting to observe the sharp contrast between the cases when $p=q$ and $p\neq q$.   Unlike the classical setting where these two cases result no different conditions, the boundedness structure gets poorer when the weight grows faster than the  Gaussian case. Furthermore, apart  from the fact that $p\neq q$,  all  of the results are independent of the size of the  exponents $p$ and $q$  in the range $0<p,q<\infty$.
\subsection{ Essential norms and Schatten class membership}\label{essential}
In this section we turn our attention  to the study of essential norms and Schatten class membership of
composition operators on the  spaces $\mathcal{F}_\psi^p$.   For a bounded linear  operator $T$ on a Banach space $\mathcal{H}$, we recall that  the essential norm $\|T\|_e$ of $T$ is the norm of its equivalence classes in the Calkin algebra. In other words,
     \begin{align*}
     \|T\|_e= \inf\{\|T-K\|;  \text{K is a compact operator}\}.
     \end{align*}
     It follows that  $\|T\|_e \leq \|T\|$ and $\|T\|_e= 0$ whenever  $T$ is a compact operator.
     Computing  the values of  the norms and essential norms   of composition operators is not an
easy task and hence, not much is known on these problems.
In this section, we will estimate these values for $C_\psi$ on the   spaces $\mathcal{F}_\varphi^p$ for $p\geq 1$.  For a noncompact operator $C_\psi$, we will indeed   show that its   essential norm $\|C_\psi\|_e$  is comparable to its operator norm $\|C_\psi\|$. In the Hilbert space setting $\mathcal{F}_\varphi^2$,  we have rather  obtained the precise values of  the norms, namely that  $\|C_\psi\|_e =\|C_\psi\| =1$.

 If  $C_{\psi}$ is  a compact operator on $\mathcal{F}_\varphi^2$, then  it admits a Schmidt decomposition, and there exist  orthonormal bases $(e_n)_{n\in\NN}$  and $(\sigma_n)_{n\in\NN}$, and a sequence of nonnegative numbers  $(\lambda_{n,\psi)})_{n\in \NN}$ with $\lambda_{(n,\psi)} \to 0$  as $n\to \infty$  such that for all $f$ in $\mathcal{F}_\varphi^2$:
\begin{align*}
 C_{\psi} f= \sum_{n=1}^\infty \lambda_{(n,\psi)} \langle f,e_n\rangle \sigma_n.
\end{align*}   The operator  $C_{\psi}$ with such a decomposition  belongs to  the
Schatten  $\mathcal{S}_p(\mathcal{F}_\varphi^2)$ class if
and only if
\begin{equation*}
\|C_{\psi}\|_{\mathcal{S}_p}^p = \sum_{n=1}^\infty |\lambda_{(n,\psi)}|^p <\infty.
\end{equation*} We refer to  \cite{BS,kz} for a  more detailed  account of the theory of Schatten classes.
It turns out that  all compact composition operators on $\mathcal{F}_\varphi^2$  are in the Schatten $S_p$ class
for all $0<p<\infty$. We may now summarize  all the above observations in the following theorem.
 \begin{theorem}\label{thm2}
Let $0<p<\infty $ and $\psi $ be a nonconstant holomorphic map on $\CC$  that induces  a bounded operator  $C_\psi$    on $\mathcal{F}_\varphi^p$. Then if  $C_\psi$  is
\begin{enumerate}
\item   not compact on $\mathcal{F}_\varphi^p$ for $p\geq 1$, then its essential norm  is comparable with  its operator norm and
        \begin{align}
    \label{estim}
    1\geq \|C_\psi\|_e \simeq \|C_\psi\|.
    \end{align}
    On the Hilbert space $\mathcal{F}_\varphi^2$ case, we have equality and
    \begin{align}
    \label{exact}
   \|C_\psi\|_e =\|C_\psi\| =1.
         \end{align}
  \item  compact on $\mathcal{F}_\varphi^2$, then it belongs to the Schatten
$\mathcal{S}_p(\mathcal{F}_\varphi^2)$ class for all $0<p<\infty$.
\end{enumerate}
 \end{theorem}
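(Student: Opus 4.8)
The plan is to read off the form of $\psi$ from Theorem~\ref{thm1} in each case and then argue operator–theoretically. \emph{For part (i)}: if $C_\psi$ is bounded but not compact on $\mathcal{F}_\varphi^p$ with $p\ge1$, then by Theorem~\ref{thm1}(ii) we must have $\psi(z)=az$ with $|a|=1$ (the alternative $|a|<1$ is killed by non-compactness, and then $b=0$ is forced). Since $\varphi$ is radial and $dA$ is rotation invariant, the substitution $w=az$ in the defining integral gives $\|C_\psi f\|_p=\|f\|_p$ for all $f$, so $C_\psi$ is an isometry; it is moreover surjective with inverse $C_{\bar a z}$, hence $\|C_\psi\|=\|C_\psi^{-1}\|=1$ and in particular $\|C_\psi\|_e\le\|C_\psi\|=1$. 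For the reverse bound I would note that for any compact $K$,
\[
\|C_\psi-K\|\ \ge\ \|C_\psi^{-1}\|^{-1}\,\|C_\psi^{-1}(C_\psi-K)\|\ =\ \|I-C_\psi^{-1}K\|\ \ge\ 1,
\]
the last step because $\|I-C_\psi^{-1}K\|<1$ would make $C_\psi^{-1}K$ invertible through its Neumann series while being compact on the infinite-dimensional space $\mathcal{F}_\varphi^p$, which is impossible. Hence $\|C_\psi\|_e\ge1$ and $\|C_\psi\|_e\simeq\|C_\psi\|$; on $\mathcal{F}_\varphi^2$ the operator $C_\psi$ is unitary and the same estimate yields the exact equality $\|C_\psi\|_e=\|C_\psi\|=1$. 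A soft alternative for the lower bound is to test against the normalized monomials $z^n/\|z^n\|_p$, which have norm one, converge to $0$ locally uniformly — hence are sent to $0$ in norm by any compact operator — while $C_\psi$ preserves their norm.

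\emph{For part (ii)}, Theorem~\ref{thm1}(ii) gives $\psi(z)=az+b$ with $|a|<1$, and the crux is the following claim: \emph{for every affine contraction $\sigma(z)=cz+d$ with $|c|<1$, the operator $C_\sigma$ is Hilbert--Schmidt on $\mathcal{F}_\varphi^2$.} To prove it I would compute, using an orthonormal basis $(e_n)$ and Tonelli's theorem,
\[
\|C_\sigma\|_{\mathcal{S}_2}^2=\sum_n\|C_\sigma e_n\|^2=\int_{\CC}\Big(\sum_n|e_n(\sigma(z))|^2\Big)e^{-2\varphi(z)}\,dA(z)=\int_{\CC}K_\varphi(\sigma(z),\sigma(z))\,e^{-2\varphi(z)}\,dA(z),
\]
where $K_\varphi$ is the reproducing kernel of $\mathcal{F}_\varphi^2$, and then invoke the standard pointwise estimate $K_\varphi(w,w)\simeq e^{2\varphi(w)}/\tau(w)^2\simeq e^{2\varphi(w)}\Delta\varphi(w)$ to obtain
\[
\|C_\sigma\|_{\mathcal{S}_2}^2\simeq\int_{\CC}e^{2\varphi(\sigma(z))-2\varphi(z)}\,\Delta\varphi(\sigma(z))\,dA(z).
\]
Since $|\sigma(z)|\le|c||z|+|d|$ and $\varphi$ is increasing, $\varphi(\sigma(z))-\varphi(z)\le\varphi(|c||z|+|d|)-\varphi(|z|)$, and a mean-value estimate gives $\varphi(|z|)-\varphi(|c||z|+|d|)\gtrsim(1-|c|)|z|\,\varphi'(|c||z|)$ for large $|z|$. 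Under the growth and admissibility hypotheses on $\varphi$ and $\tau$ one has $\log\Delta\varphi(s)=o\big(s\,\varphi'(s)\big)$, so the exponential decay of the first factor dominates the growth of $\Delta\varphi(\sigma(z))$ and the integral converges; this proves the claim.

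Granting the claim, the conclusion follows by iteration. For each integer $k\ge1$ fix a $k$-th root $a^{1/k}$ and set $\sigma_k(z)=a^{1/k}z+b\,\frac{1-a^{1/k}}{1-a}$; a direct computation shows $\sigma_k^{\circ k}=\psi$, and $|a^{1/k}|<1$, so $\sigma_k$ is again an affine contraction. Hence $C_\psi=(C_{\sigma_k})^{k}$ with $C_{\sigma_k}\in\mathcal{S}_2$ by the claim, and the H\"older inequality for Schatten norms (see \cite{kz}) yields $C_\psi\in\mathcal{S}_{2/k}$. Since $k$ is arbitrary and $\mathcal{S}_{2/k}\subseteq\mathcal{S}_p$ once $2/k\le p$, we conclude $C_\psi\in\mathcal{S}_p(\mathcal{F}_\varphi^2)$ for every $0<p<\infty$.

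The main obstacle is the integrability in the claim: one must genuinely exploit that $\varphi$ grows faster than the Gaussian weight — together with the admissibility of $\tau$ — to guarantee that $s\mapsto e^{-c\,s\varphi'(s)}\Delta\varphi(s)$ is integrable on $[1,\infty)$ for \emph{every} $c>0$; the $o(\cdot)$ form (rather than a one-sided bound) is really needed, since the contraction factors $|a|^{1/k}$ of the iterative roots tend to $1$. This is precisely the point at which the faster-than-Gaussian hypothesis is used — for slower weights the relevant $C_\sigma$ need not even be compact. Everything else reduces to Theorem~\ref{thm1}, rotation invariance, the reproducing-kernel asymptotics, and multiplicativity of Schatten norms.
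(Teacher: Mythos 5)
Your argument is correct, but it follows a genuinely different route from the paper's on both halves. For part (i) the paper obtains the lower bound $\|C_\psi\|_e\gtrsim 1$ by testing against the weakly null normalized functions $f_{(w,R)}^*$ (and against normalized kernels when $p=2$); you instead observe that $C_{az}$ with $|a|=1$ is a surjective isometry and run the Neumann-series argument $\|C_\psi-K\|=\|I-C_\psi^{-1}K\|\ge 1$ for every compact $K$ on the infinite-dimensional space $\mathcal{F}_\varphi^p$. This is cleaner and strictly stronger: it yields the exact equality $\|C_\psi\|_e=\|C_\psi\|=1$ for \emph{every} $p\ge 1$, not only $p=2$, thereby answering the question the authors explicitly leave open after \eqref{estim}. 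For part (ii) the paper restricts to $0<p<2$ by nesting and bounds $\mathrm{tr}\big((C_\psi C_\psi^*)^{p/2}\big)$ by $\int_{\CC}\|C_\psi^*k_z\|_{2}^p\,dA(z)$, where the adjoint identity $C_\psi^*K_w=K_{\psi(w)}$ and \eqref{asymptotic} produce the integrand $\big(\tau(w)/\tau(\psi(w))\big)^pe^{p(\varphi(\psi(w))-\varphi(w))}$, whose $\tau$-ratio is bounded, so only $\int_{\CC} e^{p(\varphi(\psi(w))-\varphi(w))}\,dA(w)<\infty$ is needed. You instead prove $C_\sigma\in\mathcal{S}_2$ for every affine contraction and factor $C_\psi=(C_{\sigma_k})^{k}$ through iterative roots, invoking Schatten--H\"older to land in $\mathcal{S}_{2/k}$; the factorization is valid ($\sigma_k$ fixes $b/(1-a)$ and $\sigma_k^{\circ k}=\psi$), but your Hilbert--Schmidt integral carries the extra unbounded factor $\Delta\varphi(\sigma(z))\simeq\tau(\sigma(z))^{-2}$ that the paper's ratio trick avoids, so you must additionally check that the exponential decay beats $\Delta\varphi(\sigma(z))$ --- true for all the listed examples and asserted at roughly the same level of detail as the paper's own finiteness claim, but this is the one step to nail down (your mean-value bound also tacitly uses monotonicity of $\varphi'$, which is not literally among the hypotheses). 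Net effect: your part (i) improves the theorem; your part (ii) is a workable alternative that trades the paper's one-line trace estimate for a harder integrability check plus an elegant root-extraction device.
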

In the classical setting for $p=2$, it has been proved   \cite{CMA} that the norm and essential norm of $C_\psi$ are equal and
\begin{align}
\label{classical}
 \|C_\psi\|_e = \|C_\psi\| = 1,
\end{align} where $C_\psi$ is noncompact operator induced by $\psi(z)= az, \ |a|=1$. The proof of \eqref{classical} uses Hilbert spaces techniques  based on  an explicit expression of the reproducing kernel. In the  current setting,  an explicit expression for the kernel function is still an open problem. Yet,  we managed in circumventing this difficulty by using an asymptotic estimate of $\|K_z\|_2$ as $|z| \to \infty$ and arrive at \eqref{exact}. For $p\neq 2$, we will instead use another sequence of test functions where we only know the estimated values of the functions. It remains an interesting open problem to compute the precise values of
the estimates in \eqref{estim}.  On the other hand,  results  in \cite{TM} show that every compact composition operator acting on the classical Fock space belongs to the Schatten $\mathcal{S}_p$ class for all positive $p.$ In  sprit of this, part  (ii) of our theorem shows that the result remains valid  in generalized Fock spaces generated by  fast  growing weight functions.
\subsection{Normal and unitary composition operators} \label{normal}
In this section we characterize mappings $\psi$ which induce hyponormal, normal,  and unitary composition operators $C_\psi$ on the spaces $\mathcal{F}_\varphi^2$.
Recall that a bounded linear operator $T$  on a complex Hilbert space  $\mathcal{H}$ is  said to be hyponormal if $T^*T\geq TT^*$ where $T^*$ is the adjoint of $T$. The operator is normal if $TT^* = T^*T$, and
unitary  whenever  $TT^* = T^*T = I,$ where $I$  is the identity operator on $\mathcal{H}$. Note that  while a hyponormal operator is a generalization of a normal operator,  not all  normal operators are   unitary. Our next main result shows that three of these  operator-theoretic  properties of  $C_\psi$  acting on  the space $\mathcal{F}_\varphi^2$ are equivalently described by the same condition.
\begin{theorem}\label{thm3}
Let $\psi(z)= az+b$ induces a bounded composition operator $C_\psi$  on  $\mathcal{F}_\varphi^2$. Then
the following statements are equivalent.
\begin{enumerate}
\item
 $C_\psi$ is  hyponormal;

 \item It holds that   $|a|= 1$;

 \item  $C_\psi$ is   unitary;
 \item
 $C_\psi$ is   normal.
 \end{enumerate}
\end{theorem}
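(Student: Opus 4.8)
The plan is to establish the cycle of implications (iii) $\Rightarrow$ (iv) $\Rightarrow$ (i) $\Rightarrow$ (ii) $\Rightarrow$ (iii). The implications (iii) $\Rightarrow$ (iv) and (iv) $\Rightarrow$ (i) are immediate from the definitions, since every unitary operator is normal and every normal operator is hyponormal; no work is required there. The substance of the theorem lies in the two remaining implications, and both will be driven by testing the relevant operator inequalities and identities against a good supply of functions, the natural candidates being the normalized reproducing kernels $k_w = K_w/\|K_w\|_2$ of $\mathcal{F}_\varphi^2$, for which $C_\psi^* K_w = K_{\psi(w)}$.

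For (i) $\Rightarrow$ (ii), I would suppose $C_\psi$ is hyponormal, so $\langle C_\psi^* C_\psi f, f\rangle \geq \langle C_\psi C_\psi^* f, f\rangle$ for all $f \in \mathcal{F}_\varphi^2$. Since $\psi(z) = az+b$ already induces a bounded operator, Theorem~\ref{thm1}(ii)(a) forces $|a|\le 1$, so I only need to rule out $|a|<1$. Feed the kernel functions into the hyponormality inequality: $\|C_\psi^* k_w\|_2^2 = \|K_{\psi(w)}\|_2^2/\|K_w\|_2^2$, while $\|C_\psi k_w\|_2^2$ can be estimated from below using the boundedness of $C_\psi$ and a test-function/kernel computation. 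Using the asymptotic estimate of $\|K_z\|_2$ as $|z|\to\infty$ already invoked in the discussion preceding Theorem~\ref{thm2}, together with the fact that $|\psi(w)| \simeq |a|\,|w|$ for large $|w|$, one compares the growth rates of $\|K_{\psi(w)}\|_2$ and $\|K_w\|_2$. If $|a|<1$ the point $\psi(w)$ lies much closer to the origin than $w$, which (since $\varphi$ grows faster than the Gaussian weight) makes $\|K_{\psi(w)}\|_2/\|K_w\|_2 \to 0$, contradicting the lower bound coming from hyponormality applied to a suitable sequence. Hence $|a|=1$.

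For (ii) $\Rightarrow$ (iii), assume $|a|=1$; then by Theorem~\ref{thm1}(ii)(a) boundedness forces $b=0$, so $C_\psi f = f(az)$ is simply a rotation operator. Rotations are isometries of $\mathcal{F}_\varphi^2$ because the weight $\varphi(z)=\varphi(|z|)$ and the area measure $dA$ are both rotation-invariant: a change of variables $z\mapsto az$ shows $\|C_\psi f\|_2 = \|f\|_2$. Moreover $C_\psi$ is surjective, with inverse $C_{\psi^{-1}}$ where $\psi^{-1}(z) = \bar a z$, so $C_\psi$ is a surjective isometry of a Hilbert space and therefore unitary. This closes the cycle.

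The main obstacle is the implication (i) $\Rightarrow$ (ii): one must turn the abstract operator inequality into a concrete asymptotic contradiction when $|a|<1$, and this requires a careful comparison of kernel norms $\|K_{\psi(w)}\|_2$ versus $\|K_w\|_2$ under the sole information that $\varphi$ grows faster than $\tfrac12|z|^2$ together with the admissibility conditions \eqref{assumption}. One has to be sure the estimate $\|K_z\|_2^2 \simeq e^{2\varphi(z)}/\tau(z)^2$ (or whatever sharp two-sided bound is available in this setting) is strong enough that the ratio genuinely decays, and also that $\|C_\psi k_w\|_2$ does not itself decay faster — that is where the boundedness hypothesis and a matching lower test-function estimate must be used. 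Everything else in the argument is routine: the definitional implications and the rotation-invariance computation are straightforward.
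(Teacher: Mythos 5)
Your skeleton is the paper's: (iii)$\Rightarrow$(iv)$\Rightarrow$(i) are definitional; (ii)$\Rightarrow$(iii) is done by observing that boundedness forces $b=0$ when $|a|=1$ (Theorem~\ref{thm1}), that the change of variables $z\mapsto az$ preserves $\|\cdot\|_2$ because $\varphi$ and $dA$ are rotation invariant, and that $C_{\psi}^{-1}=C_{\psi^{-1}}$ gives surjectivity; and (i)$\Rightarrow$(ii) is attacked through the reproducing kernels via $C_\psi^*K_w=K_{\psi(w)}$ and the asymptotic \eqref{asymptotic}. The first three of these are complete and agree with the paper.

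The gap is in (i)$\Rightarrow$(ii), and it is a gap of direction. Hyponormality, $C_\psi^*C_\psi\ge C_\psi C_\psi^*$, gives $\|C_\psi^*k_w\|_2\le\|C_\psi k_w\|_2$, i.e.\ an \emph{upper} bound $\|K_{\psi(w)}\|_2/\|K_w\|_2=\|C_\psi^*k_w\|_2\le\|C_\psi\|$. Your observation that this ratio tends to $0$ when $|a|<1$ is correct, but decay of the smaller side of the inequality contradicts nothing; there is no ``lower bound coming from hyponormality'' on $\|C_\psi^*k_w\|_2$. Run correctly, the kernel test only forbids $\|K_{\psi(w)}\|_2/\|K_w\|_2\to\infty$ along some sequence, which occurs exactly when $|a|=1$ and $b\ne0$ --- a case boundedness has already excluded. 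Moreover the gap is not repairable by choosing better test functions: since $\varphi$ is radial the monomials are orthogonal in $\mathcal{F}_\varphi^2$, so for $\psi(z)=az$ with $0<|a|<1$ one has $C_\psi e_n=a^n e_n$ for the orthonormal basis $e_n=z^n/\|z^n\|_2$; this operator is diagonal, hence normal and in particular hyponormal, yet $|a|<1$. So (i)$\Rightarrow$(ii) cannot be established as you sketch it. You should also be aware that the paper's own proof is thin at exactly this spot: it runs the chain $\|K_z\|_2^2\ge\|C_\psi K_z\|_2^2\ge\|C_\psi^*K_z\|_2^2=\|K_{\psi(z)}\|_2^2$ to conclude $b=0$, and then asserts that boundedness yields $|a|=1$, whereas boundedness yields only $|a|\le1$.
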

On the classical Fock space, these results were proved recently in \cite{LGR}. Our result now shows that these properties are independent of the fast growth of the inducing weight function $\varphi$.

 An interesting related property is the notion of essentially normal.  Recall that a bounded  $C_\psi$ is essentially normal if
 the commutator $[C_\psi^*, C_\psi]= C_\psi^* C_\psi-C_\psi C_\psi^*$ is compact. Then, the following is  an  immediate consequence of Theorem~\ref{thm3} and Theorem~\ref{thm1}.
 \begin{corollary}
   Let $\psi(z)= az+b$ induces a bounded composition operator $C_\psi$  on  $\mathcal{F}_\varphi^2$. Then $C_\psi$ is essentially normal.
 \end{corollary}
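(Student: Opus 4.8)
The plan is to derive the corollary directly from the characterization of bounded composition operators in Theorem~\ref{thm1}(ii) together with the equivalences in Theorem~\ref{thm3}. Recall that $C_\psi$ is essentially normal precisely when the self-commutator $[C_\psi^*,C_\psi]=C_\psi^*C_\psi-C_\psi C_\psi^*$ is compact, so it suffices to show this commutator is compact in each of the two cases permitted by boundedness.

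First I would split according to the modulus of the leading coefficient $a$ of $\psi(z)=az+b$. By Theorem~\ref{thm1}(ii)(a), boundedness of $C_\psi$ on $\mathcal{F}_\varphi^2$ forces either $|a|<1$ (with $b$ arbitrary) or $|a|=1$ (with $b=0$). In the first case, $|a|<1$, Theorem~\ref{thm1}(ii)(b) tells us that $C_\psi$ itself is compact; consequently $C_\psi^*$ is compact, and hence both products $C_\psi^*C_\psi$ and $C_\psi C_\psi^*$ are compact, so their difference $[C_\psi^*,C_\psi]$ is compact. Thus $C_\psi$ is essentially normal (indeed trivially so, being compact).

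In the remaining case $|a|=1$, the equivalence (ii)$\Rightarrow$(iv) of Theorem~\ref{thm3} shows that $C_\psi$ is normal, i.e. $C_\psi^*C_\psi=C_\psi C_\psi^*$, so the self-commutator is exactly the zero operator, which is certainly compact. In either case $[C_\psi^*,C_\psi]$ is compact, and therefore every bounded composition operator $C_\psi$ on $\mathcal{F}_\varphi^2$ with $\psi(z)=az+b$ is essentially normal, completing the proof.

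There is really no serious obstacle here: the corollary is a bookkeeping consequence of the two preceding theorems, the only point requiring a moment's care being the observation that compactness of $C_\psi$ passes to compactness of $C_\psi^*$ and hence of both composition products. Were one to want a fully self-contained argument avoiding an explicit appeal to normality in the $|a|=1$ case, one could alternatively note that $C_\psi$ is unitary there (Theorem~\ref{thm3}(iii)), whence $C_\psi^*C_\psi=C_\psi C_\psi^*=I$ directly; but invoking normality is the cleanest route.
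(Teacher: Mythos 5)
Your proof is correct and follows exactly the paper's own argument: split into the cases $|a|<1$ (where $C_\psi$ is compact by Theorem~\ref{thm1}, so the self-commutator is compact) and $|a|=1$ (where $C_\psi$ is normal by Theorem~\ref{thm3}, so the self-commutator vanishes). The extra detail you supply about compactness passing to $C_\psi^*$ and to the products is a harmless elaboration of the same route.
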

  By Theorem~\ref{thm1} either $|a|=1$ in which case by Theorem~\ref{thm3},  the operator is normal or $|a| <1$ and the operator becomes compact.
  Since normal and compact operators are  essentially normal,  the corollary trivially holds.
 \subsection{Dynamics of the  composition operators  on $\mathcal{F}_\varphi^p$ }\label{dynamical}
A bounded linear operator $T$ on a Banach  space $\mathcal{H}$ is said to be  cyclic  if there exists a vector $x$ in $\mathcal{H}$ such that the linear span of  its orbit under $T$,
\begin{align*}\text{Orb}(T,x)=\{ T^nx : n= 0, 1, 2, ..\},\end{align*}
is dense  in $\mathcal{H}$. Such a vector $x$ is called cyclic for the operator $T$. The operator is hypercyclic if the orbit itself is dense in $\mathcal{H}$, and supercyclic   if there exists a vector $x$ in $\mathcal{H}$ such that the projective orbit, \begin{align*}
\text{Projorb}(T,x)=\{ \lambda T^nx: \ \  \lambda \in \CC, n= 0, 1, 2, .. \},
\end{align*} is dense in  $\mathcal{H}$. Clearly any hypercyclic operator is cyclic, but the cyclic operators form a much larger class while supercyclicity is an intermediate property between the two. It is worth mentioning that if an operator $T$ has a hypercyclic vector, then each element in the orbit of such vector is also hypercyclic  which implies that a hypercyclic  operator has a dense set of hypercyclic vectors. For more information about hypercyclicity and supercyclicity, one may  consult  the  monographs by Bayart
and Matheron \cite{BM},  and by Grosse-Erdmann and Peris Manguillot \cite{Grosse}.

 There have been much interest for a long time in studying these properties partly because of their relations to  the famous invariant subspaces open problem which conjectures that every bounded linear  operator on a Banach space has a non-trivial closed invariant subspace.
On the other hand, the operator $C_\psi^n$ is itself a composition operator induced by the $n^{th}$ iterate of $\psi$,
\begin{align}
\label{dependent}
C_\psi^n
=  C_{\psi^n}, \ \ \psi^n= \underbrace{\psi \circ \psi\circ\psi \circ...\circ \psi}_{n \ \text{times}},
\end{align} which obviously makes the study of the dynamical properties  a natural subject. Furthermore, the relation in \eqref{dependent}  indicates that the  dynamical behavior of a composition operator is heavily  dependent on the dynamical properties of its inducing map $\psi$.

  In this section, we study the dynamical properties of  the composition operator on $\mathcal{F}_p$. We may first make the following simple observation, namely that  no bounded composition operator on $\mathcal{F}_\varphi^p$ can be hypercyclic.  To  notice this, set $\psi(z)= az+b$  and observe that if  $|a|<1$, then  the operator $C_\psi$ is compact and hence by Corollary 1.22 of \cite{BM}, it can not be hypercylic. On the other hand,  if $|a|=1,$ we may  deny the assertion and assume that the operator is hypercyclic with hypercyclic vector $f$.  By  extracting  a subsequence $\psi^{n_k} $ such that $\psi^{n_k} z \to az $ as $k \to \infty$, we observe that  for any univalent function $g$ in the orbit of $f$ and   applying \eqref{dependent}
 \begin{align*}
 g(z)= \lim_{k\to \infty} C_{\psi^{n_k}}f(z)= \lim_{k\to \infty} C_{\psi^{n_k}}f(z)= f(az).
 \end{align*} It follows  that $f$ itself  is a univalent function and hence its orbit contains only univalent functions which is a contradiction.
   Our next main result shows that the operator can not be supercyclic either.
\begin{theorem}\label{thm30}
Let $1\leq p <\infty$ and  $\psi(z)= az+b$ be a nonconstant map on $\CC$ that  induces a bounded composition operator $C_\psi$ on  $\mathcal{F}_\varphi^p$. Then $C_\psi$
\begin{enumerate}
\item  is cyclic on $\mathcal{F}_\varphi^p$ if and only if  $a^n \neq a$ for all $n>1$.  Furthermore, a function $h \in \mathcal{F}_\varphi^p$ with  Taylor series  expansion
 \begin{align*} h(z)=
  \sum_{n=0}^\infty a_n \bigg(z-\frac{b}{1-a}\bigg)^n
    \end{align*}
  is cyclic for $C_\psi$  if and only if $a_n\neq 0$ for all $n\in {\mathbb{Z}_+}:= \{0, 1, 2, 3, ...\} $.
\item can not   be  supercylic on $\mathcal{F}_\varphi^p$.
\end{enumerate}
 \end{theorem}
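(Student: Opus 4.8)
The plan is to dispatch part (ii) by a soft spectral obstruction, and to prove part (i) by splitting according to whether $|a|<1$ or $|a|=1$.

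For (ii): if $a=1$, boundedness forces $b=0$ by Theorem~\ref{thm1}, so $C_\psi=I$, and the identity on an infinite dimensional space is not supercyclic since every projective orbit spans a one dimensional subspace. If $a\neq1$, let $z_0=b/(1-a)$ be the fixed point of $\psi$; as $\psi$ is affine with $\psi(z_0)=z_0$ and $\psi'\equiv a$, the bounded functionals $\ell_n(f)=f^{(n)}(z_0)/n!$ satisfy $C_\psi^*\ell_n=a^n\ell_n$. Thus $\ell_0$ and $\ell_1$ are linearly independent eigenvectors of $C_\psi^*$ (eigenvalues $1\neq a$, both nonzero), so $Q\colon f\mapsto(\ell_0(f),\ell_1(f))$ is a bounded surjection $\mathcal F_\varphi^p\to\CC^2$ with $QC_\psi=\mathrm{diag}(1,a)Q$; a supercyclic vector for $C_\psi$ would map to a supercyclic vector for $\mathrm{diag}(1,a)$ on $\CC^2$, which is impossible because a projective orbit in $\CC^2$ is a countable union of complex lines and hence, by Baire's theorem, not dense. (Equivalently, invoke the fact from \cite{BM} that the adjoint of a supercyclic operator has at most one eigenvalue.) Hence $C_\psi$ is not supercyclic.

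For (i): by Theorem~\ref{thm1}, $|a|\le1$, $b=0$ when $|a|=1$, and $a\neq0$; note "$a^n\neq a$ for all $n>1$" is equivalent to "$a$ is not a root of unity" (which forces $a\neq1$). I would first prove necessity. If $a$ is a root of unity then $|a|=1$, so $b=0$ and $\psi^d=\mathrm{id}$ for $d$ the order of $a$, whence $C_\psi^d=I$ by \eqref{dependent} and every orbit spans a finite dimensional subspace, so $C_\psi$ is not cyclic. If $h=\sum_{n\ge0}a_n(z-z_0)^n$ (with $z_0=b/(1-a)$) has $a_k=0$, then $\ell_k(C_\psi^j h)=a^{jk}a_k=0$ for all $j$, so the nonzero bounded functional $\ell_k$ annihilates the orbit of $h$ and $h$ is not cyclic. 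For sufficiency assume $a$ is not a root of unity; then $\psi$ has the fixed point $z_0=b/(1-a)$, $\psi^j(z)=a^j(z-z_0)+z_0$, and $C_\psi^j((z-z_0)^n)=a^{jn}(z-z_0)^n$, so the monomials centred at $z_0$ are eigenvectors with pairwise distinct eigenvalues $a^n$. It suffices to show every $h=\sum a_n(z-z_0)^n\in\mathcal F_\varphi^p$ with all $a_n\neq0$ (such $h$ exist, e.g. $e^{z-z_0}$) is cyclic; with necessity this both proves $C_\psi$ cyclic and identifies the cyclic vectors. Here I use that polynomials are dense in $\mathcal F_\varphi^p$ and that Fej\'er means of Taylor series converge in $\mathcal F_\varphi^p$ norm (fibrewise Fej\'er convergence on the circle, integrated against $e^{-p\varphi}$ by dominated convergence, the Fej\'er means being $L^p$ contractions for $p\ge1$).

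The sufficiency proof then splits. If $|a|<1$: for $w\in\D$ set $\rho_w(z)=wz+z_0(1-w)$, so $\rho_{a^j}=\psi^j$; the operators $C_{\rho_w}$ are uniformly bounded on $\mathcal F_\varphi^p$ for $|w|\le r<1$ (the boundedness estimate behind Theorem~\ref{thm1}, applied uniformly), so $w\mapsto C_{\rho_w}h$ is a holomorphic $\mathcal F_\varphi^p$-valued function on $\D$ whose $n$-th Taylor coefficient is $a_n(z-z_0)^n$. If $\Lambda\in(\mathcal F_\varphi^p)^*$ annihilates the orbit of $h$, then $G(w):=\Lambda(C_{\rho_w}h)=\sum_n a_n\Lambda((z-z_0)^n)w^n$ is holomorphic on $\D$ and vanishes at each $a^j$; since $a^j\to0\in\D$ is an accumulation point of zeros, $G\equiv0$, hence $\Lambda((z-z_0)^n)=0$ for all $n$ (as $a_n\neq0$), hence $\Lambda$ kills all polynomials, hence $\Lambda=0$: the orbit spans a dense subspace. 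If $|a|=1$ (so $z_0=0$): since $a$ is not a root of unity, $\{a^j:j\ge0\}$ is dense in the unit circle; with $R_\zeta f(z)=f(\zeta z)$ a surjective isometry of $\mathcal F_\varphi^p$ (as $\varphi$ is radial) and $C_\psi^j=R_{a^j}$, one has $R_{a^{j_k}}f\to R_\zeta f$ in norm whenever $a^{j_k}\to\zeta$, because $R_{a^{j_k}}f\to R_\zeta f$ uniformly on compacta and $\int_{|z|>S}|f(a^{j_k}z)|^pe^{-p\varphi}\,dA=\int_{|w|>S}|f(w)|^pe^{-p\varphi}\,dA$ is independent of $k$ (uniform integrability, Vitali). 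Hence the closed span $M$ of the orbit of $h$ is invariant under all rotations $R_\zeta$. A closed rotation invariant subspace is the closed span of the monomials it contains: for $f\in M$ the Fej\'er mean $\sigma_N f=\frac{1}{2\pi}\int F_N(\theta)R_{e^{i\theta}}f\,d\theta$ lies in $M$ (Bochner integral of an $M$-valued map), is a polynomial, and when $a_n\neq0$ its monomial $z^n=\frac{1}{a_n}\cdot\frac{1}{2\pi}\int e^{-in\theta}R_{e^{i\theta}}f\,d\theta$ lies in $M$, while $\sigma_N f\to f$ in norm; applied to $h\in M$, all of whose coefficients are nonzero, this forces $z^n\in M$ for every $n$, so $M=\mathcal F_\varphi^p$ and $h$ is cyclic.

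The routine parts are (ii) and necessity; the heart is sufficiency in (i). The main obstacle, for $|a|<1$, is to verify that $w\mapsto C_{\rho_w}h$ is genuinely a holomorphic Banach-space-valued function on $\D$ --- i.e. the uniform boundedness of the $C_{\rho_w}$ on compacta of $\D$ and the identification of its Taylor coefficients with $a_n(z-z_0)^n$ --- and, for $|a|=1$, to upgrade convergence on compacta of the rotated functions to norm convergence (the uniform integrability step) and to establish the description of closed rotation invariant subspaces of $\mathcal F_\varphi^p$ via Fej\'er means.
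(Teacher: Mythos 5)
Your proposal is correct in substance, and in two of its three non-routine components it takes a genuinely different route from the paper. For part (ii) the paper argues by contradiction directly on the projective orbit, showing for $0<|a|<1$ that every function in the closure of the projective orbit is constant and for $|a|=1$ that the orbit consists of univalent functions; you instead note that the derivative-evaluation functionals $\ell_n$ at the fixed point $b/(1-a)$ are eigenvectors of $C_\psi^*$ with eigenvalues $a^n$, so $\sigma_p(C_\psi^*)$ contains the two distinct points $1$ and $a$, which is incompatible with supercyclicity. This is shorter and avoids the paper's limit manipulations; the one caveat is that your first justification --- that a countable union of complex lines in $\CC^2$ is ``not dense by Baire'' --- is not valid as stated, since meager sets can be dense (for $\mathrm{diag}(1,a)$ one should instead observe that the moduli $|a|^n|x_2/x_1|$ are not dense in $[0,\infty)$, or simply cite the classical fact that no operator on a finite-dimensional space of dimension at least two is supercyclic); your parenthetical alternative, the one-eigenvalue theorem for adjoints of supercyclic operators from \cite{BM}, is correct and carries the argument, so this is a repairable slip rather than a gap. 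For part (i), your necessity arguments coincide with the paper's, and in the case $|a|=1$ your extraction of the monomials $a_nz^n$ from the rotation-invariant closed span via Fourier coefficients is the paper's Cauchy-integral-formula device in different clothing --- with the bonus that your proof of the norm convergence $R_{a^{j_k}}h\to R_\zeta h$ (locally uniform convergence plus rotation-invariance of the tail integrals) is more careful than the paper's, which passes from convergence of norms to convergence in norm without comment. The genuinely new ingredient is your treatment of $0<|a|<1$: where the paper runs an induction on the partial sums $h_m$ of the Taylor expansion, you realize the orbit inside the holomorphic $\mathcal{F}_\varphi^p$-valued family $w\mapsto C_{\rho_w}h$ on the disc, observe that a functional annihilating the orbit produces a holomorphic function vanishing on the sequence $a^j\to 0$, and conclude by the identity theorem, density of polynomials, and Hahn--Banach. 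This yields a cleaner, induction-free argument at the price of the two analytic facts you correctly single out: uniform boundedness of $C_{\rho_w}$ on $|w|\le r<1$ (which follows from the pointwise estimate \eqref{pointwise} together with the growth comparison $\varphi(rz)\lesssim r^2\varphi(z)$ already used in the proof of Lemma~\ref{density}) and the holomorphy of the vector-valued map with the stated Taylor coefficients (local boundedness plus weak holomorphy tested on point evaluations). Both are within reach of the paper's toolkit, so the plan is sound.
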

  The cyclicity and  supercyclicity  problems have not been solved in the classical Fock spaces settings either  except for the Hilbert space   case which were studied respectively in   \cite{GK} and  \cite{LGR}. As can be seen in Section \ref{proofs}, our approach, which neither uses Hilbert spaces techniques nor the fast growth property of the weight function $\varphi$,  shows that the same  result holds for  all $p$ on the classical spaces as well.
 \subsection{Connected components  and isolated points of  the space of composition operators}\label{topological}
 In the present section we consider  some topological structures of  bounded compositions operators $C_\psi: \mathcal{F}_\varphi^p\to \mathcal{F}_\varphi^q$ for all  $0<p,q<\infty$. We denote by $C(\mathcal{F}_\varphi^p, \mathcal{F}_\varphi^q)$ the space of such  operators equipped with the operator norm topology. The first natural question to pose  in this direction  is when the difference of two operators from  $C(\mathcal{F}_\varphi^p, \mathcal{F}_\varphi^q)$ becomes compact. It turns out that the difference is compact if and only if both of the operators are compact.  Another natural  point of interest is to identify the isolated and connected component of  the space  $C(\mathcal{F}_\varphi^p, \mathcal{F}_\varphi^q)$ which we give a complete characterization below.
 \begin{theorem}\label{thm4}
 \begin{enumerate}
 \item
Let $0<p,q <\infty$ and $C_\psi$ be in $C(\mathcal{F}_\varphi^p, \mathcal{F}_\varphi^q)$.
If
 \begin{enumerate}
 \item $p\neq q$, then the space  $C(\mathcal{F}_\varphi^p, \mathcal{F}_\varphi^q)$ is connected.
      \item $p= q$, then   $C_\psi $ is isolated if and only if it is not compact. In this case,  the set of all compact composition operators on $ \mathcal{F}_\varphi^p$ is a connected component of $ C(\mathcal{F}_\varphi^p, \mathcal{F}_\varphi^p)$.
 \end{enumerate}
 \item Let  $0<p <\infty$ and $C_{\psi_1}, C_{\psi_2}  \in C(\mathcal{F}_\varphi^p, \mathcal{F}_\varphi^p)$  where $\psi_1\neq \psi_2$. Then
      \begin{enumerate}
      \item $C_{\psi_1}- C_{\psi_2}$ is compact on $\mathcal{F}_\varphi^p$ if  and only if both  $C_{\psi_1}$ and  $ C_{\psi_2}$  are compact.
          \item  if  $C_{\psi_1}- C_{\psi_2}$ is compact on $\mathcal{F}_\varphi^2,$ then it belongs to the Schatten $\mathcal{S}_p(\mathcal{F}_\varphi^2)$ class for all p.
          \end{enumerate}
 \end{enumerate}
 \end{theorem}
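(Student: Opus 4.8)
The plan is to reduce everything to the explicit symbol descriptions in Theorem~\ref{thm1} together with two ingredients: a norm‑continuity estimate for the parametrization $\Phi:(a,b)\mapsto C_{az+b}$ over the ``compact regime'' $\D\times\CC$, and the essential‑norm lower bound of Theorem~\ref{thm2}. First note that when $p\neq q$, Theorem~\ref{thm1}(i) gives $C(\mathcal F_\varphi^p,\mathcal F_\varphi^q)=\Phi(\D\times\CC)$, while for $p=q$, Theorem~\ref{thm1}(ii) says the compact members of $C(\mathcal F_\varphi^p,\mathcal F_\varphi^p)$ are exactly $\Phi(\D\times\CC)$; the map $\Phi$ is injective because $C_\psi$ applied to $z\mapsto z$ recovers $\psi$. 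Since $\D\times\CC$ is path‑connected, (i)(a) and the connectedness assertion in (i)(b) follow once we know $\Phi$ is norm‑continuous.

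To prove continuity it is enough to treat a segment $\psi_t(z)=a_tz+b_t$, $t\in[0,1]$, with $\rho:=\sup_t|a_t|<1$ and $M:=\sup_t|b_t|<\infty$. For $\|f\|_p\le 1$ I would write
\[
f(\psi_t(z))-f(\psi_s(z))=\big(\psi_t(z)-\psi_s(z)\big)\int_0^1 f'\big(w_u(z)\big)\,du,
\]
with $w_u(z)$ on the segment from $\psi_s(z)$ to $\psi_t(z)$, so that $|w_u(z)|\le\rho|z|+M$ and $|\psi_t(z)-\psi_s(z)|\le(|a_t-a_s|+|b_t-b_s|)(1+|z|)$. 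Inserting the standard pointwise bound $|f'(w)|\lesssim\|f\|_p\,\tau(w)^{-(2/p+1)}e^{\varphi(w)}$ and integrating, $\|(C_{\psi_t}-C_{\psi_s})f\|_q^q$ is dominated by $(|a_t-a_s|+|b_t-b_s|)^q$ times $\int_\CC(1+|z|)^q\,\tau(w_u(z))^{-q(2/p+1)}e^{q(\varphi(w_u(z))-\varphi(z))}\,dA(z)$; because $|w_u(z)|\le\rho|z|+M$ with $\rho<1$, the fast growth of $\varphi$ forces $\varphi(\rho|z|+M)-\varphi(|z|)\to-\infty$ rapidly, which overwhelms the at‑most‑polynomial factor $1/\tau$, so this integral is finite and stays bounded for $\rho$ in compact subsets of $[0,1)$. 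Hence $\|C_{\psi_t}-C_{\psi_s}\|\le C(\rho,M)\big(|a_t-a_s|+|b_t-b_s|\big)$, which is the continuity needed. This estimate, where the growth hypotheses on $\varphi$ are genuinely used, is the main obstacle of the whole proof.

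It remains, for (i)(b), to establish the isolation dichotomy and then the connected‑component claim. If $C_\psi$ is compact then $\psi(z)=az+b$ with $|a|<1$, and the continuous path $t\mapsto\Phi\big((1-t)(a,b)+t(a',b')\big)$ of pairwise distinct operators through $C_\psi$ shows $C_\psi$ is not isolated. If $C_\psi$ is not compact then $\psi(z)=az$ with $|a|=1$: for any compact operator $K$, $\|C_\psi-K\|\ge\|C_\psi-K\|_e=\|C_\psi\|_e\gtrsim 1$ by Theorem~\ref{thm2}(i) (for $0<p<1$ one runs the monomial argument below instead), so $C_\psi$ lies at distance $\gtrsim 1$ from every compact composition operator; and for a second noncompact symbol $\psi'(z)=a'z$, $a'\neq a$, testing on $e_n:=z^n/\|z^n\|_p$ gives $\|C_{az}-C_{a'z}\|\ge\sup_n|a^n-a'^n|$, which is bounded below by a positive absolute constant since the angles $n(\arg a-\arg a')$ cannot all avoid a neighbourhood of $\pi$. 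Thus $C_\psi$ is isolated if and only if it is noncompact, and consequently any connected subset of $C(\mathcal F_\varphi^p,\mathcal F_\varphi^p)$ strictly containing the nonempty connected set $\Phi(\D\times\CC)$ would contain an isolated point alongside other points, which is impossible; hence $\Phi(\D\times\CC)$ is a connected component.

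For (ii)(a) the ``if'' direction is trivial. For ``only if'', suppose $C_{\psi_1}-C_{\psi_2}$ is compact but $C_{\psi_1}$, say, is not; then $\psi_1(z)=a_1z$ with $|a_1|=1$, and $C_{\psi_2}$ cannot be compact (else $C_{\psi_1}=(C_{\psi_1}-C_{\psi_2})+C_{\psi_2}$ would be), so $\psi_2(z)=a_2z$ with $|a_2|=1$ and $a_2\neq a_1$. Then $\|(C_{\psi_1}-C_{\psi_2})e_n\|_p=|a_1^n-a_2^n|$ does not tend to $0$, and as $(e_n)$ is bounded and locally uniformly null we get $\|C_{\psi_1}-C_{\psi_2}\|_e\ge\limsup_n|a_1^n-a_2^n|>0$, contradicting compactness; hence both $C_{\psi_1},C_{\psi_2}$ are compact. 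Part (ii)(b) then follows immediately: by (ii)(a) with $p=2$ both $C_{\psi_1}$ and $C_{\psi_2}$ are compact on $\mathcal F_\varphi^2$, hence belong to $\mathcal S_p(\mathcal F_\varphi^2)$ for every $0<p<\infty$ by Theorem~\ref{thm2}(ii), and since each $\mathcal S_p$ is a linear space (indeed a two‑sided ideal), so is their difference.
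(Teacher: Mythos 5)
Your proposal reaches all the stated conclusions and is essentially correct, but it travels a genuinely different road from the paper in the two places that matter. For connectedness, the paper relies on Lemma~\ref{lem4} to get $\|C_{\psi_n}f-C_\psi f\|_q\to 0$ for each fixed $f$ and then interchanges $\sup_{\|f\|_p\le 1}$ with $\lim_n$ --- an interchange that strong convergence alone does not license; your quantitative route, bounding $f(\psi_t(z))-f(\psi_s(z))$ by a derivative estimate and showing $\|C_{\psi_t}-C_{\psi_s}\|\le C(\rho,M)(|a_t-a_s|+|b_t-b_s|)$ on compact subsets of $\D\times\CC$, is exactly the kind of uniform estimate needed to make that step airtight, at the price of having to verify carefully that $\tau(w)^{-q(2/p+1)}e^{q(\varphi(\rho|z|+M)-\varphi(z))}$ is integrable (this does use the fast growth of $\varphi$, as you say, and is where the remaining work sits). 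For (ii)(a) you test the difference on the normalized monomials $e_n=z^n/\|z^n\|_p$, for which $\|(C_{a_1z}-C_{a_2z})e_n\|_p=|a_1^n-a_2^n|$ exactly, whereas the paper tests on the functions $f^*_{(w,R)}$ and runs the pointwise estimates \eqref{test00}--\eqref{test0}; your computation is cleaner and avoids those estimates, and your sum-of-compacts argument for why $C_{\psi_2}$ cannot be compact is also tidier than the paper's pointwise inequality. The derivation of the connected-component statement from the isolation dichotomy is correctly spelled out and matches what the paper leaves implicit.

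One concrete loose end: to show a noncompact $C_\psi$ is far from the \emph{compact} composition operators you invoke $\|C_\psi-K\|\ge\|C_\psi\|_e\gtrsim 1$ from Theorem~\ref{thm2}(i), which is only stated for $p\ge 1$, and your parenthetical fallback for $0<p<1$ (``run the monomial argument'') does not work there: for $\psi_1(z)=a_1z+b$ with $b\neq 0$ the image $(C_{az}-C_{\psi_1})z^n=(az)^n-(a_1z+b)^n$ is not a scalar multiple of $z^n$, so the monomial computation gives nothing. For that range you should instead argue as the paper does, using the weakly null family $f^*_{(w,R)}$ together with $\|C_\psi f^*_{(w,R)}\|_p\gtrsim 1$ and $\|C_{\psi_1}f^*_{(w,R)}\|_p\to 0$. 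With that substitution the argument covers all $0<p<\infty$.
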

 Observe that the validity of the result in  part (a) of (i) is dependent on the fast growth of the weight function $\varphi$ while part (b) does not. Part (a) of (ii) shows that cancellation property of the inducing maps plays no roll for compactness of the difference. On the contrary, it  is worth mentioning that compactness of the differences of two composition operators on the weighted Bergman spaces over the unit disc
  has been characterized by some suitable cancellation property of the inducing symbols  at each boundary points \cite{JMO}. Such property makes it possible for  each composition operator in the difference  not necessarily to be compact.

A natural question  following Theorem~\ref{thm4} is whether every isolated composition operator in  $C(\mathcal{F}_\varphi^p, \mathcal{F}_\varphi^p)$
is  still isolated under the essential norm topology which is weaker than the topology induced by the operator norm.  Our next  main result  shows  that  this is  in deed the case.
\begin{theorem} \label{thm5}
 Let $1\leq p<\infty$. Then a composition  operator $C_\psi$ in  $C(\mathcal{F}_\varphi^p, \mathcal{F}_\varphi^p)$ is essentially isolated if and only if it is isolated.
   \end{theorem}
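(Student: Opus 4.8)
The plan is to exploit the rigidity already established in the earlier theorems: on $\mathcal{F}_\varphi^p$ every bounded composition operator is induced by an affine map $\psi(z)=az+b$ with $|a|\le 1$ (and $b=0$ when $|a|=1$), and such $C_\psi$ is compact precisely when $|a|<1$. Since essential isolation is a weaker requirement than isolation, one implication is immediate: if $C_\psi$ is essentially isolated it is automatically isolated, because $\|S-T\|_e\le\|S-T\|$ forces any operator-norm neighborhood of $C_\psi$ that is free of other composition operators in essential norm to be free of them in operator norm as well. So the whole content is the forward direction: if $C_\psi$ is isolated in $C(\mathcal{F}_\varphi^p,\mathcal{F}_\varphi^p)$ then it is essentially isolated.

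By Theorem~\ref{thm4}(i)(b), $C_\psi$ is isolated exactly when it is noncompact, i.e.\ when $\psi(z)=e^{i\theta}z$ for some real $\theta$. Thus it suffices to show: if $\psi_1(z)=az$ with $|a|=1$ and $\psi_2(z)=cz+d$ induces a bounded $C_{\psi_2}$ with $\psi_2\ne\psi_1$, then $\|C_{\psi_1}-C_{\psi_2}\|_e$ is bounded below by a universal positive constant. I would split into two cases. First, if $|c|<1$, then $C_{\psi_2}$ is compact, so $\|C_{\psi_1}-C_{\psi_2}\|_e=\|C_{\psi_1}\|_e$, which equals $1$ by Theorem~\ref{thm2} (for $p=2$) or is $\simeq 1$ and in particular bounded below by a fixed constant for general $p\ge 1$; hence such operators stay a definite essential-norm distance away. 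Second, if $|c|=1$, then $\psi_2(z)=cz$ with $c\ne a$ (the $b=0$ constraint forces $d=0$), and I must bound $\|C_{\psi_1}-C_{\psi_2}\|_e$ below. Here the natural device is the normalized reproducing kernels $k_z=K_z/\|K_z\|_2$ (in the Hilbert case) or the analogous normalized test functions used in the proof of Theorem~\ref{thm2} for $p\ne 2$: these tend weakly to zero as $|z|\to\infty$, so they can be used to compute essential norms, and $(C_{\psi_1}-C_{\psi_2})^*k_z$ should be comparable to $k_{\bar a z}-k_{\bar c z}$ up to kernel-norm factors. Using the asymptotic estimate for $\|K_z\|_2$ as $|z|\to\infty$ that the paper already invokes, together with the fact that $\varphi$ grows faster than the Gaussian weight (so the kernels at $\bar a z$ and $\bar c z$ become asymptotically ``orthogonal'' in a quantitative sense when $a\ne c$), one gets $\liminf_{|z|\to\infty}\|(C_{\psi_1}-C_{\psi_2})k_z\|_p\ge \delta$ for some $\delta>0$ independent of $c$, whence $\|C_{\psi_1}-C_{\psi_2}\|_e\ge\delta$.

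The main obstacle I anticipate is the quantitative separation of the kernel functions $K_{\bar a z}$ and $K_{\bar c z}$ when $|a|=|c|=1$ but $a\ne c$: since no closed-form expression for the reproducing kernel of $\mathcal{F}_\varphi^2$ is available, I cannot just compute $\langle k_{\bar a z},k_{\bar c z}\rangle$ directly and must instead rely on off-diagonal decay estimates for $|K_{w}(\zeta)|$ of the form $|K_w(\zeta)|e^{-\varphi(w)-\varphi(\zeta)}\lesssim e^{-\epsilon d(w,\zeta)}$ in the natural metric $d$ induced by $\tau$, which is where the admissibility conditions \eqref{assumption} and the fast growth of $\varphi$ enter decisively. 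For $p\ne 2$ the same idea goes through with the substitute test functions from the proof of Theorem~\ref{thm2}, but one only controls their sizes up to constants, so the lower bound $\delta$ will not be explicit — which is fine, since the statement only claims essential isolation, not a numerical gap. Once this separation estimate is in hand, assembling the two cases and invoking Theorem~\ref{thm4}(i)(b) to identify the isolated operators completes the argument.
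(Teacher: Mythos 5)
Your overall strategy coincides with the paper's: the easy direction follows from $\|\cdot\|_e\le\|\cdot\|$, and for the converse you reduce via Theorem~\ref{thm4} to the noncompact case $\psi_1(z)=a_1z$, $|a_1|=1$, and then bound $\|C_{\psi_1}-C_{\psi_2}\|_e$ from below by testing against a weakly null family of normalized functions. Two points of comparison are worth recording. First, your treatment of the case where $C_{\psi_2}$ is compact is cleaner than the paper's: the identity $\|C_{\psi_1}-C_{\psi_2}\|_e=\|C_{\psi_1}\|_e$ together with Theorem~\ref{thm2}(i) settles it in one line, whereas the paper reruns the entire test-function computation for $\psi_2(z)=a_2z+b$ with $|a_2|<1$. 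Second, for the case $|a_2|=1$, $a_2\neq a_1$, the $p=2$ branch of your plan rests on an off-diagonal pointwise decay estimate for the reproducing kernel of the form $|K_w(\zeta)|e^{-\varphi(w)-\varphi(\zeta)}\lesssim e^{-\epsilon d(w,\zeta)}$, which is neither proved nor cited in the paper --- only the norm asymptotics \eqref{asymptotic} are available there --- so as written that branch has a genuine gap that would require an additional lemma. The paper sidesteps this by working, for every $p$, with the functions $f_{(w,R)}^*$, whose off-diagonal decay is exactly the content of \eqref{test00}: choosing $w=\psi_1(z_0)$, the on-diagonal estimate \eqref{test0} gives $|f_{(\psi_1(z_0),R)}(\psi_1(z_0))|e^{-\varphi(z_0)}\simeq 1$, while \eqref{test00} gives $|f_{(\psi_1(z_0),R)}(\psi_2(z_0))|e^{-\varphi(z_0)}\lesssim(\tau(z_0)/(|z_0|\,|a_1-a_2|))^{R^2/2}\to 0$, and the triangle inequality then yields $\|C_{\psi_1}-C_{\psi_2}\|_e\gtrsim 1$. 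Your own fallback --- ``the substitute test functions from the proof of Theorem~\ref{thm2}'' --- is precisely this device, so your argument closes completely once you commit to those functions for every $p$, including $p=2$, rather than to the reproducing kernels.
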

   The isolated and essentially isolated points of the space of the  operators on the classical Fock spaces have not been also  identified as far as we know. As will be seen in Subsection~\ref{essential}, the method  we use to  prove  Theorem~\ref{thm5} can be easily adopted to the classical setting. In stead of using  the sequence of the  functions $f_{w, R}^*$, one can use  the sequence of the  normalized reproducing kernels  to  conclude the analogous results.
 \section{Preliminaries and auxiliary results}\label{background}
 Here we collect  background materials and present some auxiliary results which will be used to prove our main results in the sequel. Our first lemma shows that every symbol $\psi$ that induces a bounded operator on generalized Fock spaces  fixes a point in the  complex plane.
\begin{lemma}\label{lemfix}
Let $0<p, q<\infty $ and $\psi= az+b$ induces a bounded composition operator  $C_\psi:\mathcal{F}_\varphi^p \to \mathcal{F}_\varphi^q$. Then $C_\psi$ has a fixed point.
\end{lemma}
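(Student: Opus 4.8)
The plan is to argue by contraposition, and it is enough to deal with one case. An affine map $\psi(z)=az+b$ always has a fixed point in $\CC$ unless $a=1$ and $b\neq 0$: if $a\neq 1$ the point $b/(1-a)$ is fixed, while if $a=1,b=0$ then $\psi$ is the identity. So the content of the lemma is that the pure translation $\psi(z)=z+b$ with $b\neq 0$ cannot induce a bounded operator $C_\psi:\mathcal{F}_\varphi^p\to\mathcal{F}_\varphi^q$, and I would assume the contrary and look for a contradiction.

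The argument is driven by the exponentials $e_\lambda(z):=e^{\lambda z}$. Since $\varphi$ grows faster than the Gaussian weight (indeed $\Delta\varphi\to\infty$ forces $\varphi(r)/r^2\to\infty$), the quantity $|z|$ is negligible against $\varphi(z)$, so the weight $e^{p(\operatorname{Re}\lambda z-\varphi(z))}$ is integrable and $e_\lambda$ lies in every $\mathcal{F}_\varphi^p$ for each $\lambda\in\CC$; and $C_\psi e_\lambda=e_\lambda(\cdot+b)=e^{\lambda b}e_\lambda$, so $e_\lambda$ is an eigenvector of $C_\psi$ with eigenvalue $e^{\lambda b}$. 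When $p=q$ this already finishes things: $\|C_\psi\|\ge\|C_\psi e_\lambda\|_p/\|e_\lambda\|_p=|e^{\lambda b}|$, and taking $\lambda=t\overline{b}/|b|$ with $t\to\infty$ gives $|e^{\lambda b}|=e^{t|b|}\to\infty$, contradicting $\|C_\psi\|<\infty$.

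For $p\neq q$ the same idea has to be made quantitative, since the eigenvector now lives in $\mathcal{F}_\varphi^p$ while its image is measured in $\mathcal{F}_\varphi^q$. Writing $M(\lambda)=\sup_{z\in\CC}|e^{\lambda z}|e^{-\varphi(z)}$ and letting $r_\lambda$ be the modulus of a point where this supremum is attained (so $\varphi'(r_\lambda)=|\lambda|$), a routine Laplace-type analysis gives $\|e_\lambda\|_q\gtrsim M(\lambda)\tau(r_\lambda)^{2/q}$ (the integrand keeps a constant fraction of its maximum $M(\lambda)^q$ on a disc of radius $\simeq\tau(r_\lambda)$ about the extremal point) and $\|e_\lambda\|_p\lesssim M(\lambda)\,r_\lambda^{2/p}$ (the integrand is everywhere $\le M(\lambda)^p$ and decays geometrically for $|z|\gtrsim r_\lambda$). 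Hence
\[
\|C_\psi\|\ \ge\ \frac{\|C_\psi e_\lambda\|_q}{\|e_\lambda\|_p}\ =\ |e^{\lambda b}|\,\frac{\|e_\lambda\|_q}{\|e_\lambda\|_p}\ \gtrsim\ |e^{\lambda b}|\,\frac{\tau(r_\lambda)^{2/q}}{r_\lambda^{2/p}}\,.
\]
Taking again $\lambda=t\overline{b}/|b|$, $t\to\infty$, so that $|e^{\lambda b}|=e^{t|b|}$ and $r_\lambda=r_t$ with $\varphi'(r_t)=t$, the right-hand side is $\exp\bigl(t|b|-\tfrac{2}{q}\log\tfrac{1}{\tau(r_t)}-\tfrac{2}{p}\log r_t\bigr)$; since $\varphi'(r)\gtrsim r$ forces $r_t\lesssim t$ and hence $\log r_t=o(t)$, and $\log(1/\tau(r))=o(\varphi'(r))$ gives $\log(1/\tau(r_t))=o(t)$, the exponent behaves like $t|b|(1+o(1))\to\infty$. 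This is the required contradiction.

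The one step that deserves attention is the asymptotic $\log(1/\tau(r))=o(\varphi'(r))$, i.e.\ that the concentration scale $\tau(r)=(\Delta\varphi(r))^{-1/2}$ cannot collapse too quickly relative to the growth of $\varphi$; this is exactly what the admissibility conditions \eqref{assumption} provide. Under the first one, $\tau(r)r^{C}$ nondecreasing forces $\tau(r)\gtrsim r^{-C}$, so $\log(1/\tau(r))=O(\log r)=o(r)=o(\varphi'(r))$. Under the second, $\tau'(r)\log(1/\tau(r))\to0$ is, after setting $L=\log\Delta\varphi$, the statement $L L'=o(e^{L/2})$, which is more than enough to make $\log\Delta\varphi=L$ negligible against $\varphi'(r)\simeq\Delta\varphi(r)/L'(r)$. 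The remaining ingredients — the membership $e_\lambda\in\mathcal{F}_\varphi^p\cap\mathcal{F}_\varphi^q$, the two norm estimates, and $\varphi'\to\infty$ (immediate from $r\varphi'(r)=\int_0^{r}t\,\Delta\varphi(t)\,dt$ together with $\Delta\varphi\to\infty$) — are routine. Alternatively, one can bypass these asymptotics for $p=q$ entirely by noting that the submultiplicativity $\|C_\psi^{\,n}\|\le\|C_\psi\|^{\,n}$, combined with $C_\psi^{\,n}e_\lambda=e^{n\lambda b}e_\lambda$ and the fixed finiteness of $\|e_\lambda\|$, forces $\|C_\psi\|\ge|e^{\lambda b}|$ after letting $n\to\infty$.
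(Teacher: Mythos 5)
Your proof is correct in its core idea but follows a genuinely different route from the paper's. The paper disposes of the lemma in two lines by citing Theorem~\ref{thm1}: boundedness already forces $\psi(z)=az+b$ with $|a|\le 1$ and $b=0$ whenever $|a|=1$, and every affine map of that restricted form fixes either the origin (when $a=1$, $b=0$) or the point $b/(1-a)$ (when $a\neq 1$). You instead prove the contrapositive from scratch: the only fixed-point-free affine maps are the translations $z\mapsto z+b$ with $b\neq 0$, and these cannot induce bounded operators because the exponentials $e_\lambda$ are eigenvectors with eigenvalues $e^{\lambda b}$ of unbounded modulus. Your $p=q$ argument is complete and pleasantly elementary, and what your approach buys is independence from Theorem~\ref{thm1}, whose proof occupies the bulk of Section~\ref{proofs}. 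The cost shows up in the $p\neq q$ branch: the Laplace-type asymptotics $\|e_\lambda\|_q\gtrsim M(\lambda)\,\tau(r_\lambda)^{2/q}$ and $\|e_\lambda\|_p\lesssim M(\lambda)\,r_\lambda^{2/p}$ are asserted rather than proved, and the step $\log(1/\tau(r))=o(\varphi'(r))$ under the second admissibility condition leans on the relation $\varphi'\simeq\Delta\varphi/(\log\Delta\varphi)'$, which holds for the model weights but is not justified in general. These gaps are fillable --- the estimates are of the same nature as \eqref{test00}--\eqref{test} and \eqref{pointwise}, and you could equally well replace $e_\lambda$ by the paper's test functions $f_{(w,R)}$ with $w=Rb/|b|$, for which $\varphi(w)-\varphi(w-b)\to\infty$ --- but as written the $p\neq q$ case is a sketch rather than a complete proof. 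Since Theorem~\ref{thm1} is proved independently of this lemma, the paper's one-line deduction remains the shortest correct argument; yours is the more self-contained one.
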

 \begin{proof}
By Theorem~\ref{thm1}, boundedness implies that $|a|\leq 1$ and $b= 0$ whenever  $|a|= 1$. Thus,  in the case when $a= 1$, $\psi$ fixes the origin. On the other hand, if $a\neq1$, then $\psi$ fixes the point $b/(1-a)$.
\end{proof}
In  the next section we will see that the  fixed point behaviour of the map $\psi$ plays an important role in proving our supercyclicity result.

 Another important ingredient in our subsequent consideration is the following.  By Proposition~A and Corollary~8 of \cite{Olivia} where the original idea comes from \cite{Borch},  for a sufficiently large positive number $R$, there exists a number $\eta(R)$ such that for any  $w\in \CC$ with $|w|> \eta(R)$, there exists an entire function $f_{(w, R)}$ such that
     \begin{align}
  \vspace{-0.3in}
  \label{test00}
      |f_{(w,R)}(z)| e^{-\varphi(z)}\leq C \min\Bigg\{ 1,\bigg(\frac{\min\{\tau(w), \tau(z)\}}{|z-w|}\bigg)^{\frac{R^2}{2}}\Bigg\}  \ \ \ \ \ \ \end{align}for all  $ z$ in $ \CC$,  and  for some constant $C$ that depends on $\psi$ and $R$. In particular,  when $z$ belongs to $D(w, R\tau(w))$, the estimate becomes
   \begin{align}
  \label{test0}
   |f_{(w,R)}(z)| e^{-\varphi(z)}\simeq 1,
    \end{align} where $D(a,r)$ denotes  the Euclidean disk centered at $a$ and radius $r>0$. Furthermore,  the functions
   $f_{(w, R)}$ belong to $\mathcal{F}_\varphi^p$  for all $p$ with  norms  estimated by
\begin{align}
\label{test}
\| f_{(w,R)}\|_{p}^p \simeq \tau(w)^2,\ \ \ \  \eta(R) \leq |w|.
\end{align}
For $p=2$, the space $\mathcal{F}_\varphi^2$ is known to be a reproducing kernel Hilbert space.
   An explicit expression for the kernel function is  still an interesting open problem. However, an   asymptotic estimation of the norm
   \begin{align}
   \label{asymptotic}
   \| K_w\|_{2}^2 \simeq  \tau(w)^{-2} e^{2\varphi(w)}.
   \end{align}holds for all $w\in \CC$. \\
      Furthermore, for subharmonic functions $\varphi$ and $f$, it also holds  a  local pointwise estimate
\begin{align}
\label{pointwise}
|f(z)|^p e^{-\beta \varphi(z)} \lesssim \frac{1}{\sigma^2\tau(z)^2} \int_{D(z, \sigma \tau(z))} |f(w)|^pe^{-\beta \varphi(w)} dA(w)
\end{align} for all finite exponent $p$,  any real number $\beta$,  and  a small positive number $\sigma$: see Lemma~7 of \cite{Olivia} for more details.
\subsection{Density of complex polynomials in $ \mathcal{F}_{\varphi}^{p}$} The cyclicity dynamical property of operators are closely related
to the density of polynomials in  various functional spaces; see for example \cite{PSB,GK,SH}.  This fact will be an   important tool  in proving   part (i) of Theorem~\ref{thm30} in our
current setting too.  Thus, we shall first   present  the following density  result.
\begin{lemma} \label{density}
 Suppose $ 0 < p < \infty $ and  $ f \in \mathcal{F}_{\varphi}^{p}$. Then there is a sequence of polynomials $\lbrace P_{n}\rbrace $ such that $ \Vert P_{n} - f \Vert_p \rightarrow 0  $ as $ n \rightarrow \infty .$
\end{lemma}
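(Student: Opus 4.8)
The plan is to approximate a given $f\in\mathcal{F}_\varphi^p$ by Taylor polynomials of its dilates. For $0<r<1$ set $f_r(z):=f(rz)$, an entire function, and write $f(z)=\sum_{n\ge 0}a_nz^n$, so $f_r(z)=\sum_{n\ge 0}a_nr^nz^n$. I will establish two facts: \emph{(I)} $\|f_r-f\|_p\to 0$ as $r\to 1^-$; and \emph{(II)} for each fixed $r\in(0,1)$ the partial sums $S_Nf_r(z)=\sum_{n=0}^{N}a_nr^nz^n$ satisfy $\|f_r-S_Nf_r\|_p\to 0$ as $N\to\infty$. Granting these, given $\varepsilon>0$ I choose $r$ close to $1$ with $\|f_r-f\|_p$ small, then $N$ with $\|f_r-S_Nf_r\|_p$ small, and take $P:=S_Nf_r$; the triangle inequality (or its $p$-th power version $\|u+v\|_p^p\le\|u\|_p^p+\|v\|_p^p$ when $0<p<1$) then closes the argument.

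For \emph{(I)}, the change of variables $w=rz$ gives, for every $\rho>0$,
\begin{equation*}
\int_{|z|>\rho}|f_r(z)|^pe^{-p\varphi(z)}\,dA(z)=r^{-2}\int_{|w|>r\rho}|f(w)|^pe^{-p\varphi(w/r)}\,dA(w)\le r^{-2}\int_{|w|>r\rho}|f(w)|^pe^{-p\varphi(w)}\,dA(w),
\end{equation*}
since $\varphi$ is radial and nondecreasing and $|w/r|\ge|w|$. Hence for $r\in[1/2,1)$ the tails of $\|f_r\|_p^p$, and trivially of $\|f\|_p^p$, are uniformly small once $\rho$ is large. On the fixed disc $|z|\le\rho$ one has $f_r\to f$ uniformly as $r\to 1^-$, because $f$ is uniformly continuous there and $|rz-z|\le(1-r)\rho$, so the contribution of $\{|z|\le\rho\}$ tends to $0$. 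Splitting $\|f_r-f\|_p^p$ over $\{|z|\le\rho\}$ and $\{|z|>\rho\}$, and letting first $\rho\to\infty$ and then $r\to 1^-$, proves \emph{(I)}.

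For \emph{(II)}, fix $r\in(0,1)$ and choose $r'$ with $r<r'<1$. Both $S_Nf_r$ and $f_r-S_Nf_r$ are dominated, for every $N$, by $G_r(|z|):=\sum_{n\ge 0}|a_n|r^n|z|^n$, so by dominated convergence (note $S_Nf_r\to f_r$ pointwise) it suffices to check that $z\mapsto G_r(|z|)e^{-\varphi(z)}$ lies in $L^p(\CC,dA)$. A Cauchy estimate, $\sum_n|a_n|s^n\le\frac{\rho'}{\rho'-s}\,M_f(\rho')$ for $s<\rho'$, where $M_f(t)=\max_{|\zeta|=t}|f(\zeta)|$, applied with $s=r|z|$ and $\rho'=r'|z|$, gives $G_r(|z|)\le\frac{r'}{r'-r}\,M_f(r'|z|)$. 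The pointwise estimate \eqref{pointwise} with $\beta=p$ yields $M_f(t)\lesssim\tau(t)^{-2/p}e^{\varphi(t)}\|f\|_p$, and since $\tau(t)t^{C}$ increases for large $t$ we get $\tau(t)^{-2/p}\lesssim(1+t)^{2C/p}$. Combining these,
\begin{equation*}
G_r(|z|)^pe^{-p\varphi(z)}\lesssim(1+|z|)^{2C}\,e^{-p(\varphi(|z|)-\varphi(r'|z|))}\,\|f\|_p^p .
\end{equation*}
The hypothesis that $\varphi$ grows faster than $\frac{1}{2}|z|^2$ ensures $\varphi(t)-\varphi(r't)\to\infty$ as $t\to\infty$ for every $r'<1$, indeed fast enough that the right-hand side is integrable over $\CC$; hence $G_r(|\cdot|)e^{-\varphi}\in L^p(\CC,dA)$ and \emph{(II)} follows.

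The main obstacle lies in \emph{(II)}: one must convert the bare $L^p$-membership of $f$ into the pointwise growth bound $M_f(t)\lesssim\tau(t)^{-2/p}e^{\varphi(t)}\|f\|_p$ through \eqref{pointwise}, and --- this is precisely where the fast growth of $\varphi$ enters --- one must know that $\varphi(t)-\varphi(r't)$ tends to infinity faster than $\log t$ for every $r'<1$, so that it absorbs the polynomial correction coming from $\tau(t)^{-2}$. Once this is secured, everything reduces to the change of variables in \emph{(I)}, an elementary Cauchy bound, and dominated convergence, with only routine care for the quasi-norm when $0<p<1$.
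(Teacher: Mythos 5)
Your overall strategy is sound and your step \emph{(II)} takes a genuinely different route from the paper. The paper also begins with the dilation $f_r$ and proves $\|f_r-f\|_p\to 0$, but it then approximates $f_r$ by showing $f_r\in\mathcal{F}^2_{(\varphi,\alpha)}$ for a smaller weight parameter $\alpha\in(r^2,\tfrac12)$, invoking density of polynomials in that auxiliary Hilbert space, and transferring the approximation through the inclusion $\mathcal{F}^2_{(\varphi,\alpha)}\subset\mathcal{F}^p_\varphi$. You instead dominate all Taylor partial sums of $f_r$ by the single majorant $G_r(|z|)\le\frac{r'}{r'-r}M_f(r'|z|)$ via Cauchy estimates and conclude by dominated convergence. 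Your route is more self-contained (no auxiliary Hilbert space, no appeal to Hilbert-space density, and it treats $0<p<1$ on the same footing), while the paper's route outsources the hard analytic content to the known $L^2$ theory. Both arguments ultimately rest on the same two ingredients: the pointwise estimate \eqref{pointwise} and the fact that dilating by $r'<1$ produces a deficit $\varphi(t)-\varphi(r't)$ in the exponent large enough to absorb the loss of $\tau^{-2/p}$. Your proof of \emph{(I)} by tail estimates plus uniform convergence on compacts is also cleaner than the paper's norm-convergence argument.

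One step needs repair: the inequality $\tau(t)^{-2/p}\lesssim(1+t)^{2C/p}$ uses only the first of the two alternative admissibility conditions ($\tau(r)r^C$ increasing). The paper explicitly allows weights such as $\varphi(r)=e^{\beta r}$ or $\varphi(r)=e^{e^r}$, for which $\tau\simeq(\Delta\varphi)^{-1/2}$ decays faster than any polynomial and only the second alternative ($\tau'(r)\log\frac{1}{\tau(r)}\to 0$) holds; for those weights your polynomial bound on $\tau^{-2/p}$ is false. The fix is local and is essentially the device the paper itself uses in this proof: retain the factor $\tau(r'|z|)^{-2}$, bound it by $e^{\varepsilon\varphi(r'|z|)}$ for a suitable small $\varepsilon$ (the paper uses $\tau(rw)^{-4/p}\lesssim e^{\varphi(rw)}$), and then observe that $\varphi(t)-\varphi(r't)$ is bounded below by a positive multiple of $\varphi(t)$ for large $t$, so the majorant remains integrable. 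With that adjustment your argument goes through for the full class of admissible weights.
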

This lemma was first proved in \cite[Theorem 28]{Olivia}.  We provide here  another proof  using  the notions of inclusion and  dilation  techniques.
\begin{proof}
For  $ f \in \mathcal{F}_{\varphi}^{p}$ and  $ 0 < r < 1, $ define  a  sequence of  dilation functions $ f_{r} $ by $ f_{r}(z)= f (rz) $. Then  it  suffices to show that $ \|f_{r} - f \|_{p} \rightarrow 0  $ as $ r \rightarrow 1^{-} $ and  $\| f_{r} - P_{n} \|_{p} \rightarrow 0$ as $n\to \infty $ where $P_n$ is a sequence of complex polynomials. To show the first, we  may compute
\begin{align*}
  \| f_{r}\|^{p}_{p} =  \int_{\mathbb{C}} | f ( r z ) |^{p} e^{-p \varphi( z )}dA(z)     =  \frac{1}{r^{2}}\int_{\mathbb{C}} | f ( w ) |^{p} e^{-p\varphi( r^{-1}w )} dA(w) \\
  = \frac{1}{r^{2}} \int_{\mathbb{C}} | f ( w ) |^{p} e^{-p\varphi( w )}e^{-p ( \varphi ( r^{-1}w) + \varphi ( w ))} dA(w).
\end{align*}
Since $ \varphi $ is an  increasing radial function and $ 0 < r < 1 $ we have $ e^{-p ( \varphi ( r^{-1}w) - \varphi ( w ))} \leq 1 $ for all $w\in \CC$. Applying  Lebesque dominated convergence theorem,
\begin{align*}
 \lim_{r \rightarrow 1^{-}}\| f_{r}\|^{p}_{p}= \lim_{r \rightarrow 1^{-}} \frac{1}{r^{2}}\int_{\mathbb{C}}  | f ( w ) |^{p} e^{-p\varphi( w )}\Big(e^{-p ( \varphi ( r^{-1}w) - \varphi ( w ))} \Big) dA(w)= \| f \|_{p}^{p},
\end{align*}
showing that   $ \| f_{r}\|^{p}_{p} \rightarrow \| f \|_{p}^{p} $ and  hence $ f_{r}( z ) \rightarrow f( z ) $ as $ r \rightarrow 1^{-}$.\\
Therefore,
\begin{align}
\label{one}
\lim_{r \rightarrow 1^{-} }\|f_{r} - f \|_{p} =0.
\end{align}
Next, we fix  some $ r \in ( 0 , 1) $,   $ \alpha  \in ( r^2,\frac{1}{2})$ and proceed to show that
 $ f_{r} \in \mathcal{F}_{(\varphi, \alpha) }^{2}$ and  $ \mathcal{F}_{(\varphi , \alpha)} ^{2} \subset \mathcal{F}_{\varphi}^{p}$ where
\begin{align*}\mathcal{F}_{(\varphi, \alpha) }^{2}:=\left\{ f \; \text{entire} : \; \|f\|_{(2,\alpha)}^2= \int_{\mathbb{C}} | f ( z )|^{2} e^{-2\alpha \varphi ( z )} dA ( z ) < \infty \right\}.   \end{align*}
To  prove  the first, we may apply \eqref{pointwise} and  estimate
\begin{align*}
\| f_{r} \|_{(2, \alpha)}^{2}= \int_{\CC} |f(rw)|^2 e^{-2\alpha\varphi(w)} dA(w)\lesssim \| f \|_{p}^{2} \int_{\CC}  \frac{1}{\tau(rw)^{\frac{4}{p}}}e^{2\varphi(wr)-2\alpha\varphi(w)} dA(w).
\end{align*}
By definition of $\tau$ and $\varphi$, we also observe that
\begin{align*}\frac{1}{\tau(rw)^{\frac{4}{p}}}\lesssim e^{\varphi(wr)}\text{ as}\ \ |w| \to \infty.\end{align*} Taking this into account and the fact that $\alpha >r^2$ we further  estimate
\begin{align*}
\int_{\CC} \frac{1}{\tau(rw)^{\frac{4}{p}}}e^{2\varphi(wr)-2\alpha\varphi(w)} dA(w)\lesssim
\int_{\CC} e^{4\varphi(wr)-2\alpha\varphi(w)} dA(w)\\
\lesssim \int_{\CC} e^{2r^2\varphi(w)-2\alpha\varphi(w)} dA(w)  <\infty,
\end{align*} here  we used the fact that $\varphi$ grows faster than the classical function $|z|^2/2$ and hence  $\varphi(rw) \lesssim  \frac{r^2}{2} \varphi(w)$ whenever $|w| \to \infty$.

  For the inclusion property, we consider  $ h \in \mathcal{F}_{(\varphi, \alpha)}^{2}$ and  applying \eqref{pointwise} again  and proceed to estimate
\begin{align*}
 \int_{\CC}|h (z )|^{p} e^{-p\varphi( z )} dA (z )
    \lesssim  \| h \|_{(2,\alpha)}^{p} \int_{\CC} \frac{e^{p\alpha\varphi(z)-p\varphi(z)}}{\tau(z)^p} dA(z)\\
    \leq  \| h \|_{(2,\alpha)}^{p} \int_{\CC} e^{2p\alpha\varphi(z)-p\varphi(z)} dA(z)\lesssim \| h \|_{(2,\alpha)}^{p}.
\end{align*}
Now, since the  set of all holomorphic complex  polynomials  is  dense in the Hilbert space $ \mathcal{F}^{2}_{(\varphi, \alpha)}$, taking  $ P_{n}$ be the $n^{th}$  Taylor polynomial of $f_r$, we deduce from the inclusion property that
\begin{align*}
  \| f_{r} - P_{n} \|_{p} \leq C \| f_{r} -P_{n} \|_{(2, \alpha)}\rightarrow 0
\end{align*} as $n\to \infty$.  From this and  \eqref{one},  the result follows.
\end{proof}
The next lemma will find application in  proving  the equality of  the norm and essential norm of the composition operator when it acts on the generalized Hilbert space $\mathcal{F}_\varphi^2$.
\begin{lemma}\label{lem3}
 The normalized reproducing kernel $K_z/\|K_z\|_{2}$ converges weakly to $0$ in $\mathcal{F}_\varphi^2$ when $|z| \to \infty.$
 \end{lemma}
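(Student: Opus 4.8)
The plan is to read off the conclusion from the two asymptotic facts already recorded in this section: the norm estimate \eqref{asymptotic} for $\|K_z\|_2$ and the local pointwise estimate \eqref{pointwise}. Since $\|K_z/\|K_z\|_2\|_2 = 1$ for every $z$, the family $\{K_z/\|K_z\|_2\}$ is norm bounded; so to establish weak convergence to $0$ it suffices to show $\langle g, K_z/\|K_z\|_2\rangle \to 0$ as $|z|\to\infty$ for every $g\in\mathcal{F}_\varphi^2$. (In fact the estimate below works verbatim for every such $g$, so no appeal to density of a test class is needed.)

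First I would use the reproducing property $\langle g, K_z\rangle = g(z)$ to write
\[
\left|\left\langle g, \frac{K_z}{\|K_z\|_2}\right\rangle\right| = \frac{|g(z)|}{\|K_z\|_2},
\]
and then substitute \eqref{asymptotic}, which gives $\|K_z\|_2^2 \simeq \tau(z)^{-2} e^{2\varphi(z)}$. Next, applying \eqref{pointwise} with $p = 2$ and $\beta = 2$ to the entire function $g$ yields
\[
|g(z)|^2 e^{-2\varphi(z)} \lesssim \frac{1}{\sigma^2\tau(z)^2}\int_{D(z,\sigma\tau(z))} |g(w)|^2 e^{-2\varphi(w)}\, dA(w).
\]
Combining the last two displays, the common factor $\tau(z)^{-2} e^{2\varphi(z)}$ cancels and one is left with
\[
\left|\left\langle g, \frac{K_z}{\|K_z\|_2}\right\rangle\right|^2 \lesssim \int_{D(z,\sigma\tau(z))} |g(w)|^2 e^{-2\varphi(w)}\, dA(w).
\]

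To finish, I would use that $\tau$ is bounded on $\CC$ (it is $\simeq 1$ on the unit disk and tends to $0$ at infinity), so there is a constant $M$ with $\sigma\tau(z)\leq M$ for all $z$, whence $D(z,\sigma\tau(z)) \subseteq \{w : |w|\geq |z|-M\}$. Since $g\in\mathcal{F}_\varphi^2$, the measure $|g(w)|^2 e^{-2\varphi(w)}\, dA(w)$ is finite on $\CC$, and hence by absolute continuity of the integral (equivalently, the dominated convergence theorem) the right-hand side above tends to $0$ as $|z|\to\infty$. This gives $\langle g, K_z/\|K_z\|_2\rangle \to 0$ for every $g\in\mathcal{F}_\varphi^2$, i.e.\ $K_z/\|K_z\|_2 \rightharpoonup 0$.

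The only step requiring genuine care — and the place where the lack of a closed-form kernel would otherwise be an obstacle — is the cancellation of $\tau(z)^{-2}e^{2\varphi(z)}$: it relies entirely on the two-sided estimate \eqref{asymptotic}, so one must be sure the implied constants there and in \eqref{pointwise} are independent of $z$ for large $|z|$, which they are by their construction in \cite{Olivia}. Everything else is routine.
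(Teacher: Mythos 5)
Your proof is correct, but it takes a genuinely different route from the one in the paper. The paper's argument is purely algebraic: it uses the fact that the normalized monomials $z^n/\|z^n\|_2$ form an orthonormal basis (so polynomials are dense and it suffices to test against $w^m$), together with the identity $\|K_z\|_2^2=\sum_n |z|^{2n}/\|z^n\|_2^2$, a power series in $|z|^2$ with positive coefficients; this forces $\|K_z\|_2$ to dominate every power of $|z|$, hence $|z|^m/\|K_z\|_2\to 0$. You instead test against an arbitrary $g\in\mathcal{F}_\varphi^2$ directly, combining the reproducing identity $\langle g,K_z\rangle=g(z)$ with the asymptotic norm estimate \eqref{asymptotic} and the local subharmonic estimate \eqref{pointwise}; the factor $\tau(z)^{-2}e^{2\varphi(z)}$ cancels exactly as you say, leaving the tail integral $\int_{D(z,\sigma\tau(z))}|g|^2e^{-2\varphi}\,dA$, which vanishes as $|z|\to\infty$ since $\tau$ is bounded and the measure is finite. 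Both arguments are sound. The paper's proof is more self-contained (it needs only radiality of $\varphi$ and density of polynomials, not the external estimates \eqref{asymptotic} and \eqref{pointwise}), whereas yours avoids the density reduction altogether, applies verbatim to every $g$, and yields a quantitative decay rate for $|\langle g, K_z/\|K_z\|_2\rangle|$ in terms of the tail of $\|g\|_2^2$ — which could be reused elsewhere. One could also streamline your version slightly: $|g(z)|e^{-\varphi(z)}\to 0$ as $|z|\to\infty$ for $g\in\mathcal{F}_\varphi^2$ plus $\tau(z)\|K_z\|_2 e^{-\varphi(z)}\simeq 1$ already gives the conclusion, since $\tau$ is bounded.
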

 \begin{proof}
 The sequence $e_n(z)= z^n/ \|z^n\|_{2}, \  n\geq  0$ represents the standard orthonormal basis  for $\mathcal{F}_\varphi^2$. It means that holomorphic polynomials are dense in $\mathcal{F}_\varphi^2$, and hence suffices to show that for any
nonnegative integer $m$
\begin{align*}
\Big |\Big\langle w^m, \frac{K_z}{\|K_z\|_{2}}\Big\rangle\Big|=\frac{|z|^m}{\|K_z\|_{2}} \to 0,   \ \ |z| \to \infty.
\end{align*}
  But this holds trivially as
\begin{align*}
\|K_z\|_{2}^2= \sum_{n=0} |e_n(z)|^2= \sum_{n=0}^\infty \frac{|z|^{2n}} {\|z^n\|_{2}^2},
\end{align*} which is a power series on $|z|^2$ with positive coefficients.
 \end{proof}
 We close this section with a lemma that will be used to prove Theorem~\ref{thm30} and Theorem~\ref{thm4} in the next section
 \begin{lemma} \label{lem4}
 Let $0< p, q <\infty, \ \ \psi_n(z)=a_nz+b_n $,  and $\psi(z)=az+b $ where  $(a_n) $ and $(b_n) $ are  sequences of complex  numbers   such that  $0\leq |a_n| \leq 1$ for all $n$,   $a_n \to a$ and $b_n \to b $  as $n \to \infty.$  Then for any  $f\in \mathcal{F}_\varphi^p$ and $ C_{\psi}, C_{\psi_n}\in C(\mathcal{F}_\varphi^p, \mathcal{F}_\varphi^q)$
\begin{align}
\label{uniform2}
 \lim_{n\to \infty}\|C_{\psi_n}f - C_{\psi}f\|_q =0.
\end{align}
     \end{lemma}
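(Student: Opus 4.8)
The plan is to reduce \eqref{uniform2} to the case of polynomials, where it is immediate, and then to pass to a general $f\in\mathcal F_\varphi^p$ by combining the density of polynomials (Lemma~\ref{density}) with a bound on the norms of the operators $C_{\psi_n}$ that is uniform in $n$. Concretely, I would fix $f\in\mathcal F_\varphi^p$, put $M=1+\sup_n|b_n|<\infty$, and for $\varepsilon>0$ choose by Lemma~\ref{density} a polynomial $P$ with $\|f-P\|_{p}<\varepsilon$, so that
\begin{align*}
\|C_{\psi_n}f-C_\psi f\|_{q}\le\|C_{\psi_n}(f-P)\|_{q}+\|(C_{\psi_n}-C_\psi)P\|_{q}+\|C_\psi(P-f)\|_{q}.
\end{align*}
The last term is at most $\|C_\psi\|\,\varepsilon$. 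The middle term tends to $0$ because $(C_{\psi_n}-C_\psi)P$ is a polynomial of degree at most $\deg P$ whose coefficients are fixed polynomial expressions in $(a_n,b_n)$, and hence converge, as $a_n\to a$ and $b_n\to b$, to the coefficients of $C_\psi P-C_\psi P=0$; since every polynomial lies in $\mathcal F_\varphi^q$ (as $\varphi$ grows faster than $\tfrac12|z|^2$) and the polynomials of degree at most $\deg P$ span a finite dimensional subspace of $\mathcal F_\varphi^q$, on which all vector topologies coincide, coefficientwise convergence forces $\|(C_{\psi_n}-C_\psi)P\|_{q}\to0$.

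Everything then hinges on controlling $\|C_{\psi_n}(f-P)\|_{q}$ uniformly in $n$. If $|a|<1$ I would argue directly: there is $\rho<1$ with $|a_n|\le\rho$ for all large $n$, and then the pointwise estimate \eqref{pointwise} with $\beta=p$, which yields $|g(w)|\lesssim\tau(w)^{-2/p}e^{\varphi(w)}\|g\|_{p}$, together with $|\psi_n(z)|\le\rho|z|+M$ and the super-Gaussian growth of $\varphi$ (so that $\varphi(\rho'|z|)\lesssim\tfrac{(\rho')^2}{2}\varphi(z)$ for any $\rho'\in(\rho,1)$ and $|z|$ large), gives
\begin{align*}
\|C_{\psi_n}g\|_{q}^{q}\lesssim\|g\|_{p}^{q}\int_{\CC}\tau(\psi_n z)^{-2q/p}\,e^{q\varphi(\psi_n z)-q\varphi(z)}\,dA(z)\lesssim\|g\|_{p}^{q},
\end{align*}
with constants depending only on $\rho$ and $M$; hence $\sup_{n}\|C_{\psi_n}\|<\infty$ and $\|C_{\psi_n}(f-P)\|_{q}\lesssim\varepsilon$. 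If instead $|a|=1$, Theorem~\ref{thm1} forces $p=q$ and $b=0$, so $\psi(z)=az$ and $C_\psi$ is an invertible isometry of $\mathcal F_\varphi^p$; writing $C_{\psi_n}=C_{\phi_n}C_\psi$ with $\phi_n(z)=(a_n/a)z+b_n/a$, where $|a_n/a|\le1$ and $\phi_n\to\mathrm{id}$, reduces \eqref{uniform2} to $\|C_{\phi_n}h-h\|_{p}\to0$ for $h=C_\psi f$, and when $|a_n|=1$ one has $b_n=0$, so $C_{\phi_n}$ is again an isometry and the tail term $\|C_{\phi_n}(h-Q)\|_{p}=\|h-Q\|_{p}$ is controlled for free. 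Collecting the estimates gives $\limsup_n\|C_{\psi_n}f-C_\psi f\|_{q}\lesssim\varepsilon$ uniformly in $\varepsilon$, and letting $\varepsilon\to0$ completes the argument.

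The genuinely delicate step I expect is the uniform control in the boundary regime where $|a|=1$ but $|a_n|<1$: there $C_{\psi_n}$ is non-isometric, the integral displayed above no longer closes up, and one cannot simply dominate $\|C_{\psi_n}(f-P)\|_{q}$ by $\|C_{\psi_n}\|\,\varepsilon$. I would try to circumvent this not through a crude operator bound but by exploiting, for the fixed tail function $g=f-P$, that $|g(w)|e^{-\varphi(w)}\to0$ as $|w|\to\infty$ and that $\psi_n$ differs from the rotation $\psi$ by at most $|a_n-a||z|+|b_n-b|$, hence agrees with it uniformly on disks of radius comparable to $1/|a_n-a|\to\infty$: one splits $\int_{\CC}|g(\psi_n z)-g(\psi z)|^{q}e^{-q\varphi(z)}\,dA(z)$ over such an exhausting disk, where uniform convergence on compacta applies, and over its complement, where the sub-mean-value inequality \eqref{pointwise} applied in the variable $w=\psi_n z$, combined with the rapid decay of $e^{-q\varphi}$, should bound the remainder independently of $n$. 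Making this complement estimate uniform — rather than merely finite for each fixed $n$ — is the heart of the matter.
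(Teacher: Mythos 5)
Your approach is genuinely different from the paper's. The paper does not go through polynomials or operator norms at all: it changes variables $w=\psi_n(z)$ in $\|C_{\psi_n}f\|_q^q$, rewrites the integral as $\int_{\CC}|f(w)|^qe^{-q\varphi(w)}\bigl(|a_n|^{-2}e^{q\varphi(w)-q\varphi((w-b_n)/a_n)}\bigr)\,dA(w)$, and applies dominated convergence to the \emph{fixed} function $f$ to get $\|C_{\psi_n}f\|_q\to\|C_\psi f\|_q$, from which \eqref{uniform2} is deduced (implicitly using pointwise convergence of $C_{\psi_n}f$ to $C_\psi f$ together with convergence of the norms). By working with one $f$ throughout, the paper never needs any bound that is uniform over the unit ball of $\mathcal F_\varphi^p$, which is precisely the quantity your argument requires.

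That requirement is where your proof has a genuine gap, and you have correctly located it yourself: in the regime $|a|=1$ (hence $p=q$, $b=0$) but $|a_n|<1$, the term $\|C_{\psi_n}(f-P)\|_q$ cannot be dominated by $\sup_n\|C_{\psi_n}\|\cdot\varepsilon$, because $\sup_n\|C_{\psi_n}\|$ can genuinely be infinite here. Indeed $\|C_{\psi_n}\|$ is controlled by $\sup_z e^{\varphi(\psi_n(z))-\varphi(z)}$, and $|\psi_n(z)|>|z|$ on the disk $|z|<|b_n|/(1-|a_n|)$; taking $|z|$ comparable to that radius one picks up a factor of order $e^{c\,\varphi'(r_n)|b_n|}$ with $r_n\simeq|b_n|/(1-|a_n|)$, which for super-exponential $\varphi$ (e.g.\ $\varphi(r)=e^{e^r}$) blows up even though $b_n\to0$, provided $1-|a_n|\to0$ fast enough. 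Your proposed repair --- splitting $\int_{\CC}|g(\psi_nz)-g(\psi z)|^qe^{-q\varphi(z)}\,dA(z)$ over a disk of radius $\sim1/|a_n-a|$ and its complement --- runs into the same obstruction on the complement: writing $|g(\psi_nz)|^qe^{-q\varphi(z)}=\bigl(|g(\psi_nz)|e^{-\varphi(\psi_nz)}\bigr)^qe^{q(\varphi(\psi_nz)-\varphi(z))}$, the first factor is small but the second is exactly the unbounded weight above, so the complement integral is not shown to be uniformly small. Since you explicitly flag this step as ``the heart of the matter'' without closing it, the argument as written does not prove the lemma in this boundary regime. The cases $|a|<1$ (where $|a_n|\le\rho<1$ eventually and the super-Gaussian growth of $\varphi$ does give a uniform operator bound) and $|a|=|a_n|=1$ (isometries) are handled correctly, and the finite-dimensional argument for $\|(C_{\psi_n}-C_\psi)P\|_q\to0$ is fine.
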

 \begin{proof} If $a_n= 0$ for all $n,$ then the lemma trivially follows. Thus,  assuming $0<|a|\leq 1$, we compute
 \begin{align*}
 \|C_{\psi_n}f \|_q^q= \int_{\CC}  |f(a_nz+b_n)|^q e^{-q\varphi(z)} dA(z)\quad \quad \quad \quad \quad \quad \quad \quad \quad\\
 = \int_{\CC}  |f(a_nz+b)|^q e^{-q\varphi(a_nz+b_n)} \Big( e^{q\varphi(a_nz+b_n)-q\varphi(z)} \Big) dA(z)\\
 = \int_{\CC}  |f(w) |^q e^{-q\varphi(w)}  \Big( |a_n|^{-2}e^{q\varphi(w)-q\varphi((w-b_n)/a_n)} \Big) dA(w).
\end{align*}
 Since $|a_n| \leq 1$, the quantity $ e^{q\varphi(w)-q\varphi((w-b_n)/a_n)} $ is uniformly bounded on $\CC$.  Applying Lebsegue dominated convergence theorem and smoothness of the weight function $\varphi$, we obtain
 \begin{align*}
 \lim_{n\to \infty}\|C_{\psi_n}f \|_q^q= \lim_{n\to \infty} \int_{\CC}  |f(w) |^q e^{-q\varphi(w)}  \Big( |a_n|^{-2}e^{q\varphi(w)-q\varphi((w-b_n)/a_n)} \Big) dA(w)\\
 = \int_{\CC}  |f(w) |^q e^{-q\varphi(w)}  \Big( |a|^{-2}e^{q\varphi(w)-q\varphi((w-b)/a)} \Big) dA(w)\\
 = \int_{\CC}  |f(az+b) |^q e^{-q\varphi(z)}    dA(z)= \|C_{\psi}f \|_q^q
\end{align*} from which \eqref{uniform2} follows.
 \end{proof}
  \section{Proof of the main results}\label{proofs}
 We now turn to the proofs of the main results.
   \subsection{Proof of Theorem~\ref{thm1}}
    We may first assume that  $0<p,q<\infty$ and  reformulate the boundedness and compactness problems of $C_\psi$ in terms of
     embedding maps between
  $\mathcal{F}_\varphi^p$ and $\mathcal{F}_\varphi^q$.  We   set a  pullback measure $\mu_{(\psi,q)}$  on $\CC$ as
  \begin{align}
  \label{pull}
  \mu_{(\psi,q)}(E)= \int_{\psi^{-1}(E)} e^{-q\varphi(w)} dA(w)
  \end{align} for every Borel subset $E$ of $\CC$.  Then we   observe
  \begin{align}
  \label{Carleson}
\|C_\psi f\|_{q}^q= \int_{\CC} |f(\psi(z))|^q e^{-q\varphi(z)}dA(z)=  \int_{\CC} |f(z)|^qd \mu_{(\psi,q)}(z).
 \end{align}
 From this,  it follows that $C_\psi:  \mathcal{F}_\varphi^p \to \mathcal{F}_\varphi^q$ is bounded if and only if the embedding map
 $i_d:  \mathcal{F}_\varphi^p \to L^q( \mu_{(\psi,q)})$ is bounded. To study  this reformulation  further, we may consider first part (i) of the theorem  along the   following two cases:\\
 \emph{ Case 1:} Assume  $0<p<q<\infty$. By  Theorem~1 of \cite{Olivia},  the map $i_d:  \mathcal{F}_\varphi^p \to L^q( \mu_{(\psi,q)})$ is bounded if and only if  for some $\delta >0,$
 \begin{align*}
 \sup_{w\in \CC} \frac{1}{\tau(w)^{2q/p}} \int_{D(w,\delta\tau(w))} e^{q\varphi(z)} d \mu_{(\psi,q)}(z)<\infty.
 \end{align*}
 Using \eqref{pull},  we may rewrite  this condition  again  as
 \begin{align}
 \label{bounded}
I:=\sup_{w\in \CC} \frac{1}{\tau(w)^{2q/p}} \int_{D(w,\delta\tau(w))} e^{q\varphi(z)} d \mu_{(\psi,q)}(z)\quad \quad \quad  \quad \quad \quad \quad \quad \quad  \nonumber\\
 = \sup_{w\in \CC} \frac{1}{\tau(w)^{2q/p}} \int_{D(w,\delta\tau(w))}e^{q( \varphi(z)-\varphi(\psi^{-1}(z))} dA(\psi^{-1}(z))<\infty.
\end{align}
Having singled out this  equivalent  reformulation, the next task is to examine  condition \eqref{bounded} and arrive at the assertion of the theorem. Let us  first  assume that \eqref{bounded} holds and show that $\psi(z)=az+b$ for some $|a|<1$. Applying  \eqref{pointwise} and estimating further on the right-hand side of \eqref{bounded} gives
\begin{align}
\label{finite}
I\gtrsim   \tau(\psi(w))^{\frac{2p-2q}{p}}  e^{q\big( \varphi((\psi(w)))-\varphi(w)\big)}
\end{align} for all $w$ in $\CC$ which  implies
\begin{align}
\label{one}
\tau(\psi(w))^{2\frac{(q-p)}{p}}\gtrsim e^{q\big( \varphi((\psi(w)))-\varphi(w)\big)}.
\end{align}
We claim that \begin{align*}
\limsup_{|w|\to \infty} (\varphi(\psi(w))-\varphi(w)) <0.
\end{align*}
If not, then  there exists a sequence $w_j\in \CC$ such that $|w_j| \to \infty$  as $j \to \infty $ and
\begin{align*}
\limsup_{j\to \infty} \varphi(\psi(w_j))-\varphi(w_j))\geq 0.
\end{align*}
This along with  \eqref{one} and applying the admissibility  assumptions on \eqref{assumption}, and the fact that $\psi$ is a  nonconstant entire function,   we get
\begin{align*}
0= \limsup_{j \to \infty}\tau(\psi(w_j))^{2\frac{(q-p)}{p}}\gtrsim \limsup_{j\to \infty}e^{q\big( \varphi((\psi(w_j)))-\varphi(w_j)\big)}.\nonumber\\
= e^{\limsup_{j\to \infty}q\big( \varphi((\psi(w_j)))-\varphi(w_j))\big)}\geq 1,
\end{align*}
 which is a contradiction. By the growth assumption on $\psi$  and \eqref{one} we  see that $\psi(z)= az+b$ for some $a$, $b$ in $\CC $ and $|a|<1.$

 Next, we assume that $\psi$ has the above linear form with  $|a|<1,$ and  proceed to show   that $C_\psi$  is a compact map. Using the  preceding embedding formulation and Theorem~1 of \cite{Olivia}, $C_\psi: \mathcal{F}_\varphi^p \to \mathcal{F}_\varphi^q $ is compact if and only if
 \begin{align}
 \label{compc}
 \lim_{|w|\to \infty}\frac{1}{\tau(w)^{2q/p}} \int_{D(w,\delta\tau(w))}e^{q\varphi(z)-q\varphi(\psi^{-1}(z))} dA(\psi^{-1}(z))= 0.
 \end{align}
  Since $|a|<1$, the integrand above is  a decaying function. Thus,
  \begin{align}
 \label{boundedd}
 \frac{1}{\tau(w)^{2q/p}} \int_{D(w,\delta\tau(w))}e^{q\varphi(z)-q\varphi(\psi^{-1}(z))} dA(\psi^{-1}(z))\nonumber\\
\lesssim  \frac{\tau(w)^2}{\tau(w)^{2q/p}} e^{q\varphi(w)-q\varphi(\psi^{-1}(w))}=  \tau(az+b)^{\frac{2p-2q}{p}} e^{q\varphi(az+b)-q\varphi(z))}.
\end{align}
By definition of $\varphi$ and $\tau$, we notice that the last  quantity in \eqref{boundedd} tends to zero as $|w|\to \infty$ and hence \eqref{compc} holds.
Since compactness obviously implies boundedness, we are finished with the proof for the case $p<q$.

\emph{Case 2}: $0<q<p<\infty$. Invoking the reformulation in \eqref{Carleson} again, $C_\psi:  \mathcal{F}_\varphi^p \to \mathcal{F}_\varphi^q$ is bounded (compact) if and only if the embedding map
 $i_d:  \mathcal{F}_\varphi^p \to L^q( \mu_{(\psi,q)})$ is bounded (compact). By Theorem~1 of \cite{Olivia},  boundedness or compactness of $i_d$  holds if and only   if  for some $\delta >0,$ the function
 \begin{align*}
 \mathcal{T}(z):= \frac{1}{\tau(z)^2} \int_{D(z,\delta\tau(z))} e^{q\varphi(w)} d \mu_{(\psi,q)}(w)= \frac{1}{\tau(z)^2}\int_{D(z,\delta\tau(z))}\frac{e^{q \varphi(w)}}{e^{q\varphi(\psi^{-1}(w))}} dA(\psi^{-1}(w))
 \end{align*} belongs to  $L^{\frac{p}{p-q}}(\CC, dA)$. We plan to show that this  holds if and only if $\psi$ has the form $\psi(z)= az+b$ with $|a|<1.$ Assuming the latter and applying H\"older's inequality
\begin{align*}
\int_{\CC}|\mathcal{T}(z)|^{\frac{p}{p-q}} dA(z)= \int_{\CC}\bigg(\frac{1}{\tau(z)^2}\int_{D(z,\delta\tau(z))}\frac{e^{q \varphi(w)}}{e^{q\varphi(\psi^{-1}(w))}} dA(\psi^{-1}(w))\bigg)^{\frac{p}{p-q}} dA(z)\nonumber\\
\lesssim \int_{\CC} \tau(z)^{-2} \int_{D(z,\delta\tau(z))} \frac{e^{\frac{qp}{p-q} \varphi(w)}}{e^{\frac{qp}{p-q}\varphi(\psi^{-1}(w))}} dA(\psi^{-1}(w))  dA(z)=:\mathcal{T}_1
\end{align*}
Since  $w\in  D(z,\delta\tau(z))$, by   Lemma~5 of \cite{Olivia}  there exists a positive  constant $c$  with \begin{align*}\frac{1}{c}\tau(w)\leq \tau(z)\leq c \tau(w).\end{align*}
 Then, for any $ \zeta \in D(z,\delta\tau(z))$
 \begin{align*}|\zeta-w| \leq |\zeta-z|+|z-w| \leq 2\delta\tau(z) \leq 2\delta c \tau(w)= \beta \tau(w), \ \ \beta:=2\delta c.\end{align*}
This shows that  $ D(z,\delta\tau(z)) \subset D(w,\beta\tau(w)) $ which together with Fubini's theorem and Lemma~5 of \cite{Olivia} again   imply
\begin{align*}
\mathcal{T}_1= \int_{\CC} \tau(z)^{-2} \int _{\CC}\chi_{D(z,\delta\tau(z))}(w) \frac{e^{\frac{qp}{p-q}\varphi(w)}}{e^{\frac{qp}{p-q}\varphi(\psi^{-1}(w))}} dA(\psi^{-1}(w))  dA(z)\quad \quad \quad \\
\leq \int_{\CC}\frac{e^{\frac{qp}{p-q}\varphi(w)}}{e^{\frac{qp}{p-q}\varphi(\psi^{-1}(w))}} \left( \int _{\CC}\chi_{D(w,\beta\tau(w))}(z) \tau(z)^{-2}dA(z)\right) dA(\psi^{-1}(w))   \\
  =  \int_{\CC}\frac{e^{\frac{qp}{p-q}\varphi(w)}}{e^{\frac{qp}{p-q}\varphi(\psi^{-1}(w))}} \left( \int _{D(w,\beta\tau(w))} \tau(z)^{-2}dA(z)\right) dA(\psi^{-1}(w))\\
\simeq  \int_{\CC}\frac{e^{\frac{qp}{p-q}\varphi(w)}}{e^{\frac{qp}{p-q}\varphi(\psi^{-1}(w))}}  dA(\psi^{-1}(w)) <\infty.
\end{align*}
On  the other hand, if $ \mathcal{T}$ is  $L^{\frac{p}{p-q}}$ integrable over $\CC$, then $C_\psi: \mathcal{F}_\varphi^p \to \mathcal{F}_\varphi^q  $ is bounded, and applying $C_\psi$ to the sequence of test functions
$f_{(w, R)}$ and using a weaker version  of the point estimate in  \eqref{pointwise}
\begin{align*}
\|f_{(w, R)}\|_p \gtrsim \|C_\psi f_{(w, R)} \|_q \gtrsim  | f_{(w, R)}(\psi(z))|\tau(z)^{\frac{2}{q}} e^{-\varphi(z)}
\end{align*} for all points $w, z \in \CC$. Setting, in particular, $w= \psi(z)$  and  invoking the estimates in \eqref{test0} and  \eqref{test} gives
\begin{align}
\label{change}
\tau(\psi(z))^{\frac{2}{p}} \gtrsim \tau(z)^{\frac{2}{q}} e^{\varphi(\psi(z))-\varphi(z)}.
\end{align}  Since $\psi$ is a nonconstant entire function, the left-hand side of \eqref{change} tends to zero as $|z|\to \infty.$ So does the right-hand side and that happens only if
\begin{align}
\label{change1}
\sup_{z\in \CC} e^{\varphi(\psi(z))-\varphi(z)} <\infty.
\end{align}
Arguing as in the proof of the corresponding part in case 1, we observe that \eqref{change1} holds only if  $\psi$  has  a  linear form
$\psi(z)= az+b $  with $|a|\leq1 $ and $b= 0$ whenever $|a|=1$. We further claim that $|a|<1$. If not,  using again the $L^{\frac{p}{p-q}}$ integrability of $ \mathcal{T}$
\begin{align*}
\int_{\CC}|\mathcal{T}(z)|^{\frac{p}{p-q}} dA(z)= \int_{\CC} \tau(z)^{-\frac{2p}{p-q}} \bigg(\int_{D(z,\delta\tau(z))}\frac{e^{q \varphi(w)}}{e^{q\varphi(\psi^{-1}(w))}} dA(\psi^{-1}(w))\bigg)^{\frac{p}{p-q}}dA(z)\nonumber\\
\gtrsim \int_{\CC} \tau(z)^{-\frac{2p}{p-q}} \bigg(\int_{D(z/a,\delta\tau(z))}\frac{e^{q \varphi(aw)}}{e^{q\varphi(w)} }dA(w)\bigg)^{\frac{p}{p-q}}dA(z)\nonumber\\
= \int_{\CC} \tau(z)^{-\frac{2p}{p-q}} \tau(z/a)^{\frac{2p}{p-q}} dA(z)=\infty
\end{align*} which is  a contradiction as $ \tau(z/a)= \tau(z)$ whenever $|a|=1$ .

(ii) The corresponding proofs for  this part  follows rather easily by simply setting $p=q$ in the arguments made in part (i). Thus, we  skip it.
\subsection{Proof of Theorem~\ref{thm2}}
 (i) If $C_\psi $ is bounded but not compact, then by Theorem~\ref{thm1},  $\psi(z)= az$ where $|a|=1$. Consequently, $\varphi(\psi(z))= \varphi(az)= \varphi(|az|)= \varphi (z)$. With this, we  find an upper bound for  the norm of the operator
 \begin{align*}
  \|C_\psi f\|_{p}^p=\int_{\CC} \frac{|f(\psi(z))|^p}{e^{p\varphi(z)}} dA(z) \leq  \sup_{z\in \CC} \Big( e^{p\varphi(\psi(z))-p\varphi(z)}
\Big)  \int_{\CC} \frac{| f(\psi(z))|^p}{e^{p\varphi(\psi(z))}} dA(z)\\
= \sup_{z\in \CC}  e^{p\varphi(\psi(z))-p\varphi(z)}
  \| f\|_{p}^p=
    \| f\|_{p}^p. \ \ \quad
 \end{align*}
Therefore,
 \begin{align}
 \label{uppper}
 1\geq \|C_\psi \|\geq
 \|C_\psi \|_{e}.
 \end{align}
A common way to prove lower bounds for  essential norms
is to find a suitable weakly  null  sequence of functions $f_n$ and use  the fact that
\vspace{-0.02in}
\begin{align}
\label{common}
\|C_\psi\|_e \geq \limsup_{n\to \infty} \|C_\psi f_n\|_{p}.
\end{align}
On classical Fock spaces, the sequence of the reproducing kernels does  this job. Since no explicit expression is known for the
  kernel function in our current setting, we will instead use  the sequence of functions  \begin{align}
  \label{unitt}f_{(w, R)}^*=f_{(w, R)}/\|f_{(w, R)}\|_{p} \end{align}   as described by the properties in
\eqref{test00}, \eqref{test0}, and \eqref{test}. Obviously, the sequence $ f_{(w, R)}^* $ is uniformly bounded,   and due to the relation in \eqref{test00},  $ f_{(w, R)}^* \rightarrow 0 $ uniformly on compact subset of $\mathbb{C}$ as $|w|\rightarrow \infty.$ Thus,  $ f_{(w, R)}^* \rightarrow 0 $ weakly as $|w|\rightarrow \infty .$    With this,  we proceed  to make further estimates  on the right-hand side of  the norm in \eqref{common}. Making use of \eqref{pointwise} for some small  positive number $\delta$
\begin{align*}
\|C_\psi\|_e \geq \limsup_{|w|\to \infty} \big\| C_\psi f^*_{((\psi(w),R)}\big\|_{p} \quad \quad \quad \quad \quad \quad \quad \quad \quad \quad \quad \quad \quad \quad \quad \quad \quad  \\
\simeq \limsup_{|w| \to \infty} \frac{1}{\tau(w)^{\frac{2}{p}}} \Bigg(\int_{\CC}
|f_{((\psi(w),R)}(\psi(z))|^pe^{-p\varphi(z)}dA(z)\Bigg)^{\frac{1}{p}}\nonumber\\
 \geq \limsup_{|w|\to \infty} \frac{1}{\tau(w)^{\frac{2}{p}}} \Bigg( \int_{D(\psi(w),\delta \tau(\psi(w)))} |f_{((\psi(w),R)}(\psi(z))|^pe^{-p\varphi(\psi(z))}dA(z)\Bigg)^{\frac{1}{p}}\nonumber\\
 \gtrsim \limsup_{|w|\to \infty} \frac{\tau(\psi(w))^{\frac{2}{p}}|f_{((\psi(w),R)}(\psi(w))|^pe^{-p\varphi(\psi(w))}}{\tau(w)^{\frac{2}{p}}}\\
 \simeq \limsup_{|w|\to \infty} \frac{\tau(\psi(w))^{\frac{2}{p}}}{\tau(w)^{\frac{2}{p}}}= \limsup_{|w|\to \infty} \frac{\tau(w)^{\frac{2}{p}}}{\tau(w)^{\frac{2}{p}}}=1
 \end{align*} which  completes the proof of the lower estimate.

For the Hilbert space case, applying Lemma~\ref{lem3}, we have
\begin{align*}
\|C_\psi\|_e \geq \limsup_{|w|\to \infty} \big\|\|K_w\|_{2}^{-1} C_\psi K_w\big\|_{2} \quad \quad \quad \quad \quad \quad \quad \quad \quad \quad \quad \quad \quad \quad \quad \quad \quad \nonumber  \\
=  \limsup_{|w|\to \infty} \|K_w\|_{2}^{-1}\bigg( \int_{\CC} |K_w(\psi(z))|^2 e^{-2\varphi(z)} dA(z)\bigg)^{1/2}\nonumber\\
=  \limsup_{|w|\to \infty} \|K_w\|_{2}^{-1}\bigg( \int_{\CC} |K_w(az)|^2 e^{-2\varphi(az)} dA(z)\bigg)^{1/2}= 1,
\end{align*} from which and \eqref{uppper} we arrive at the  asserted equality.

 (ii) Since  Schatten class membership has the nested property in the sense that    $\mathcal{S}_p\subseteq  \mathcal{S}_q$ for  $p\leq q$, it suffices to verify  the theorem  only for the case  when  $p$ is in the range $0<p<2$. Recall that a compact operator $T$ belongs to the Schatten $\mathcal{S}_p$ class if and only if  the positive operator  $(T^*T)^{p/2}$ belongs to the trace class $\mathcal{S}_1$.  Furthermore, $T \in \mathcal{S}_p$ if and only if $T^* \in S_p,$ and $\|T\|_{\mathcal{S}_p} = \|T^*\|_{\mathcal{S}_p} $. Thus, we may  estimate the trace of $\big( C_\psi C_\psi^*\big)^{p/2}$ by
\begin{align}
\label{help}
\text{tr}\big((C_\psi C_\psi^*)^{\frac{p}{2}} \big)= \int_{\CC} \Big\langle \big(C_\psi C_\psi^*  k_{z}\big)^{\frac{p}{2}}, k_{z}\Big \rangle dA(z)
\leq \int_{\CC}\Big\langle C_\psi C_\psi^* k_{z}, k_{z}\Big\rangle^{\frac{p}{2}}dA(z)\nonumber\\
=\int_{\CC} \| C_\psi^*k_{z}\|_{2}^p dA(z),
\end{align} where the inequality  holds  since $0<p\leq 2, \ \ C_\psi C_\psi^* $ is a positive operator,  and $k_{z}= K_z/\|K_z\|_2$ is a unit norm vector, see \cite[Proposition 1.31]{kz}. On the other hand, by the reproducing property of the kernel function, we have the adjoint property
\begin{align*}
C_\psi^*K_{w}(z)= \big\langle C_\psi^*K_{w}, K_{z}\big\rangle= \big\langle K_{w}, C_\psi K_{z}\big\rangle
= \overline{\big\langle C_\psi K_{z},K_{w} \big\rangle}=  K_{\psi(w)}(z).
\end{align*}
From this  estimate and \eqref{asymptotic}, we  have that
\begin{equation*}
\|C_\psi^* k_{w}\|_{2}\simeq  \frac{\tau(w)}{\tau(\psi(w))} e^{\varphi(\psi(w))-\varphi(w)}.
\end{equation*}
This along with \eqref{help} and compactness of $C_\psi$  implies
\begin{align*}
\text{tr}\big(\big(C_\psi C_\psi^*\big)^{\frac{p}{2}} \big)\leq  \int_{\CC }\bigg(\frac{\tau(w)}{\tau(\psi(w))}\bigg)^{p} e^{p(\varphi(\psi(w))-\varphi(w))}dA(z) \quad \quad \quad \quad \quad \quad \nonumber\\
= \int_{\CC }\bigg(\frac{\tau(w)}{\tau(aw+b)}\bigg)^{p} e^{p(\varphi(\psi(w))-\varphi(w))}dA(z)
\lesssim  \int_{\CC } e^{p(\varphi(\psi(w))-\varphi(w))}dA(z)<\infty,
\end{align*} from which and
 condition \eqref{help},  we conclude that  $\text{tr}\big(\big(C_\psi C_\psi^*\big)^{\frac{p}{2}} \big)$ is finite.
\subsection{Proof of Theorem~\ref{thm3}}
Obviously (iv) implies (i). On the other hand, a unitary operator has inverse equal to its adjoint, and also  an invertible operator commute with its inverse which gives  (iii) implies (iv). Thus, we shall proceed to  show that (i) implies (ii) and (ii) implies (iii). To this end, if  $C_\psi$ is hyponormal, then applying \eqref{exact} and  the adjoint property again
\begin{align*}
\|K_z\|_{2}^2\geq  \|C_\psi K_z\|_{2}^2\geq \|C_{\psi}^* K_z\|_{2}^2 = \|K_{\psi(z)}\|_{2}^2.
\end{align*}
From this and the asymptotic relation in \eqref{asymptotic}, we further have
\begin{align*}
\frac{e^{2\varphi(z)}}{\tau(z)^2} \gtrsim \frac{e^{2\varphi(az+b)}}{\tau(az+b))^2}
\end{align*}  and hence
\begin{align}
\label{normal}
\tau(az+b)^2 \gtrsim \tau(z)^2 e^{2\varphi(az+b)-2\varphi(z)}.
\end{align}
By definition of $\tau$ and the admissibility condition on the weight function $\varphi$, the  inequality  in  \eqref{normal} holds when $|z|\to \infty$ only if $b= 0$. Then boundedness of the operator implies that $|a|= 1$.

On the other hand,  if condition (ii) holds, then   $b= 0$ and  $C_\psi(z)= az, $ with $ |a|=1$.  We need to show that  $C_\psi$ is surjective and preserves the inner product on $\mathcal{F}_\varphi^2$. Thus, for each $f, g$ in  $\mathcal{F}_\varphi^2$:
\begin{align*}
\langle C_\psi f, C_\psi g \rangle=  \int_{\CC}f(az) \overline{g(az)} e^{-2\varphi(z)} dA(z)\quad \quad \quad \quad  \quad \quad \quad \quad\nonumber\\
=\frac{1}{|a|^2} \int_{\CC}f(w) \overline{g(w)} e^{-2\varphi(w)} dA(w)= \langle  f,  g \rangle.
\end{align*} which shows that the operator preserves the inner product. It remains to show that the operator is also surjective. But this follows easily since $C_{\psi}^{-1}= C_{\psi^{-1}}$ exists in this case.
\subsection{Proof of Theorem~\ref{thm30}}
\emph{Part(i)}. As pointed  earlier, this part of the theorem  was proved for the special case $p= 2$ and $\varphi(z)= z^s, s\leq 1$ in [Theorem 4.2] \cite{GK}. The proof in \cite{GK} is   based on
Hilbert space properties. In the proof to follow, we will follow the same approach but replaces all the  Hilbert space arguments by other general arguments.

 Let us first  assume that $C_\psi$ is cyclic  and prove  the necessity of the  condition. Arguing on the contrary, if $ a^{k} = a   $ for some $ k \geq 2  $, then  $|a|=1$ and hence  $\psi(z)= az$.  For any cyclic vector $f_0$ in $ \mathcal{F}_{\varphi}^p$,  it  follows that
 $C^{k}_{\psi}f_0( z ) = f_0 ( a^{k}z )= f_0 ( az ) = C_{\psi} f_0 ( z ) $
  which implies \begin{align*} \{ C_{\psi}^{n} f_0 , n \in {\mathbb{Z}_+}  \} =  \{ C_{\psi}^{n} f_0 : \; n =0,  1, 2, 3, . . . k  \}. \end{align*}
   This shows that  the closed linear span of the  orbit  is finite dimensional, and hence $ C_{\psi} $ can not be cyclic.

Conversely, suppose  $ \psi ( z ) = a z+b $  and $ a^{n} \neq a   $ for every $ n \geq 2 $ which obviously implies that $a\neq1$. Then  we proceed to show that there exists
    a cyclic vector  $ h \in \mathcal{F}_{\varphi}^p $  with  Taylor  series expansion at $z= \frac{b}{1-a}$
    \begin{align*}  h(z)=
  \sum_{n=0}a_n \bigg(z-\frac{b}{1-a}\bigg)^n.
      \end{align*}
   Let us first make a short argument verifying   the necessity that for $h$ to be a  cyclic  vector, $a_{n} \neq 0 $ for all $  n \in {\mathbb{Z}_+}$. If $a_n= 0$ for some $n= m$, it follows from the fact that
   \begin{align*}
   C_\psi^k h(z)= \sum_{n=0}^{\infty}a_{n} a^{kn}\Big(z-\frac{b}{1-a}\Big)^{n},
   \end{align*}  all functions $f$ in the closed linear span of $
   \ \left\{ C_{\psi}^{k} h : k \in {\mathbb{Z}_+}\right\} $  satisfy  $ \frac{d^m}{dz^m} f\big |_{z=\frac{b}{1-a}}= 0 $
    which   contradicts  the cyclic behaviour of $h$.

  We may now consider the case when $|a|=1$ and hence $b=0$.  This together with the assumption  $ a^{n} \neq a   $ for every $ n \geq 2 $  imply
  \begin{align*} \overline{\left\{ a^{k},  k\in {\mathbb{Z}_+}  \right\}} = \mathbb{T}= \{ z \in \mathbb{C}: | z |= 1\}.
   \end{align*}
 Thus, for each  $ w \in \mathbb{T} $ there exists  a sequence $ \{ k_{j} \}_{j} \; \text{in} \; \mathbb{Z_{+}}$ such that $ a^{k_{j}} \rightarrow w \; \text{as} \; j \rightarrow \infty.  $
Let $ \psi_{w} ( z ) = w z. $ Then we claim
\begin{align}
\label{uniform}
\lim_{j \rightarrow \infty } \| C_{\psi}^{k_{j}}h - C_{\psi_{w}} h \|_p=0.
\end{align}
Using the radial property $\varphi(a^{k_j}z)= \varphi(z)$ and change of variables,  we compute
\begin{align*}
\lim_{j \to \infty} \|C_{\psi}^{k_{j}}h\|_p^p=  \lim_{j\to \infty}\int_{\CC} |h(a^{k_j}z)|^p e^{-p\varphi(a^{k_{j}}z)}dA(z)\quad \quad \quad \quad \quad \quad \quad \\
= \lim_{j\to \infty}\frac{1}{|a^{k_{j}}|^2} \int_{\CC} |h(z)|^p e^{-p\varphi(z)} dA(z)
=\frac{1}{|w|^2} \int_{\CC} |h(z)|^p e^{-p\varphi(z)} dA(z)\\
= \int_{\CC} |h(wz)|^p e^{-p\varphi(z)} dA(z)= \|C_{\psi_w}h\|_p^p
\end{align*} from which \eqref{uniform} follows.  This  verifies that $ C_{\psi_{w}} h$ belongs to the closed linear span of  $  \left\{ C_{\psi}^{k} h : k \in {\mathbb{Z}_+}\right\}. $ \\
The mapping $ G : \mathbb{T} \rightarrow  \mathcal{F} _{\varphi}^{p} $ defined by $ G (w )= C_{_{\psi_{w}}} h $ is continuous, which can be extended to analytic function $ \tilde{G}$ in $ \mathbb{D}$ with  $ \tilde{G}(w) = G ( w )$ on the boundary of  $ \mathbb{D}$.
 Then,  by Cauchy Integral Formula ( using $ C_{\psi_{w}}( z ) = G ( w )(z) =\tilde{G}(w)(z) $)
 \begin{align*} a_{n} z^{n} = \dfrac{1}{2\pi i}\int_{|w|=1}\dfrac{C_{\psi_{w}}h ( z )}{w^{n+1}} dw.
  \end{align*}
 Hence the set of  polynomials  $ a_{n} z^{n}, \ \ n\in {\mathbb{Z}_+} $ belongs to  the closed linear span of  $  \left\{ C_{\psi}^{k} h : k \in {\mathbb{Z}_+}\right\}. $  From this,  the fact that  $a_n\neq 0 $ for all $n \in {\mathbb{Z}_+}$, and
Lemma~\ref{density}, the  conclusion of the theorem follows  for this case.

It remains to show the case when $0<|a|<1$. For each  $ m \in {\mathbb{Z}_+}$,  we decompose  the function $h$ as  $h= h_m+g_m$ where
\begin{align}\label{split}
h_{m}( z ) = \sum_{n=0}^{m}a_{n}\left(  z -\frac{b}{1-a}\right)^{n} \hspace{0.4cm} \text{and} \  \hspace{0.4cm} g_{m}( z ) = \sum_{n=m+1}^{\infty}a_{n} \left(  z -\frac{b}{1-a}\right)^{n}.
\end{align}
 Using induction we plan to prove that for every  $m \in {\mathbb{Z}_+}$  \begin{align*} h_{m} \in  \overline{\text{span} \left\{ C_{\varphi}^{k} h : k \in {\mathbb{Z}_+}\right\}}.  \end{align*}
To this end, consider a function  $ g $ in $ \mathcal{F}_{\varphi}^{p}$ and observe that
\begin{align*} C_{\psi}^{k}g ( z ) = g\left(a^{k} z + \dfrac{b(1 - a^{k})}{1-a}\right).  \end{align*} Since $ | a |< 1 ,$ we  also have
$ a^{k} z + (1-a)^{-1}b(1 - a^{k})\rightarrow (1-a)^{-1}b$  and by  Lemma~\ref{lem4}
\begin{align*}
  \lim_{k\to \infty }\| C_{\psi}^{k}g - C_{\frac{b}{1-a}}g \|_{p} = 0.
\end{align*}
It follows form  this and \eqref{split} that
 \begin{align*}
  \lim_{k \to  \infty}\| C_{\psi}^{k}g_{0} - C_{\frac{b}{1-a}}g_{0}  \|_{p} = \lim_{k \to  \infty} \| C_{\psi}^{k}g_{0} \|_{p} =0
 \end{align*} from which we  further deduce
  \begin{align*} \| C_{\psi}^{k}h - a_{0} \|_p = \| C_{\psi}^{k}( a_{0} + g_{0} ) - a_{0} \|_p \leq \| C_{\psi}^{k}( a_{0}  ) - a_{0} \|_p + \| C_{\psi}^{k}(  g_{0} )  \|_p \rightarrow 0 \end{align*}
   as $ k \rightarrow \infty.$
Therefore,
 \begin{align*} h_{0}\in \overline{\text{span} \left\{ C_{\psi}^{k} h : k \in {\mathbb{Z}_+}\right\}}.
  \end{align*}
Suppose now  that $ h_{0}, h_{1}, . . . h_{N-1}\in \overline{\text{span} \left\{ C_{\psi}^{k} h : k \in {\mathbb{Z}_+}\right\}}.  $
Then by  the decomposition in \eqref{split}  it holds that $  g_{N-1}\in \overline{\text{span} \left\{ C_{\psi}^{k} h : k\in {\mathbb{Z}_+}\right\}},$  and hence
\begin{align}\label{cycle}
  C_{\psi}^{j} g_{N-1}\in \overline{\text{span} \left\{ C_{\psi}^{k} h : k \in {\mathbb{Z}_+}\right\}}
\end{align}
for every $ j \in {\mathbb{Z}_+}$.  We next compute
  \begin{align}
   C_{\psi}^{j} g_{N-1}( z )= C_{\psi}^{j} \sum_{n=N}^{\infty} a_{n} \left(z - \dfrac{b}{1 - a}\right)^{n}
      = \sum_{n=N}^{\infty} a_{n} a^{jn} \left(z - \dfrac{b}{1 - a}\right)^{n} \quad \quad \nonumber\\
    = a^{jN}\left(z - \dfrac{b}{1 - a}\right)^{N} \sum_{n=N}^{\infty} a_{n} a^{j(n-N)} \left(z - \dfrac{b}{1 - a}\right)^{n-N}\quad \quad \nonumber\\
    = a^{jN}\left(z - \dfrac{b}{1 - a}\right)^{N} C_{\psi}^{j}\sum_{n=N}^{\infty} a_{n}  \left(z - \dfrac{b}{1 - a}\right)^{n-N}
         =a^{jN}\left(z - \dfrac{b}{1 - a}\right)^{N} C_{\psi}^{j} f_{N-1}(z)
         \label{cycle1}
 \end{align}
 where $\psi^j(z)= a^jz + \frac{b(1-a^j)}{1-a}$ and
 \begin{align*}
   f_{N-1}(z)  =  a_N+ \sum_{n=N+1}^{\infty} a_{n}  \left(z - \dfrac{b}{1 - a}\right)^{n-N}.
 \end{align*}
  From \eqref{cycle} and \eqref{cycle1} we  also  obtain
   \begin{align}
   \label{cyclic3}
     \left(z - \dfrac{b}{1 - a}\right)^{N} C_{\psi}^{j} f_{N-1} \in \overline{\text{span} \left\{ C_{\psi}^{k} h : k \in {\mathbb{Z}_+}\right\}}.
   \end{align}
   By Lemma~\ref{lem4}  we have that
   \begin{align}
   \label{null}
   \lim_{j\to \infty} \|C_{\psi^{j}} f_{N-1} - a_{N}\|_p= \lim_{j\to \infty} \|C_{\psi^{j}} f_{N-1} -  C_{\frac{b}{1-a}}f_{N-1} \|_p= 0.
   \end{align}
  To this end, we  further  claim that
  \begin{align}
  \label{cyclic4}
   \Gamma_j(z):= \left(z - \dfrac{b}{1 - a}\right)^{N} C_{\psi^{j}} f_{N-1} \rightarrow a_{N}\left(z - \dfrac{b}{1 - a}\right)^{N}=:\Gamma(z)
  \end{align}
in $ \mathcal{F}_{\varphi}^{p}$ as $ j \rightarrow \infty $ as well.   We may  compute
\begin{align*}
\|\Gamma_j\|_p^p= \int_{\mathbb{C}}\bigg| \left(z - \dfrac{b}{1 - a}\right)^{N} C_{\psi}^{j} f_{N-1}(z)\bigg|^{p}e^{-p\varphi(z)} dA(z)\quad \quad \quad \quad \quad \quad  \\
= \int_{\mathbb{C}}\bigg| f_{N-1}\Big(a^jz+\frac{b(1-a^j)}{1-a}\Big)\bigg|^{p}e^{-p\varphi\big(a^jz+\frac{b(1-a^j)}{1-a}\big)}U_j(z)dA(z)
\end{align*} where
\begin{align*}
U_j(z)=\bigg|z - \dfrac{b}{1 - a}\bigg|^{pN} e^{p\varphi\big(a^jz+\frac{b(1-a^j)}{1-a}\big)-p \varphi(z)}
\end{align*}
We also observe that since $\varphi$ is an increasing weight function, and $|a^j|<1$, the sequence of functions $U_j$ are uniformly bounded over $\CC$. Furthermore, since norm convergence in $ \mathcal{F}_{\varphi}^{p}$ implies pointwise convergence,  by \eqref{null} for each $z\in\CC$
\begin{align*}
      C_{\psi^{j}} f_{N-1}(z) \to    C_{\frac{b}{1-a}}f_{N-1}(z)
   \end{align*}  as $ j \rightarrow \infty $. With  this,  an application of Lebesques convergence theorem implies
 \begin{align*}
\lim_{j\to \infty}\|\Gamma_j\|_p^p= \lim_{j\to \infty} \int_{\mathbb{C}}\bigg| f_{N-1}\Big(a^jz+\frac{b(1-a^j)}{1-a}\Big)\bigg|^{p}e^{-p\varphi\big(a^jz+\frac{b(1-a^j)}{1-a}\big)}U_j(z)dA(z)\\
=  \int_{\mathbb{C}}\Big| C_{\frac{b}{1-a}}f_{N-1}(z)\Big|^{p}  \bigg|z - \dfrac{b}{1 - a}\bigg|^{pN} e^{-p\varphi(z)}dA(z)=\|\Gamma\|_p^p.
\end{align*}
    Thus, the claim in \eqref{cyclic4} follows which along with   \eqref{cyclic3} give
\begin{align*}
 a_{N}\left(z - \dfrac{b}{1 - a}\right)^{N} \in \overline{\text{span} \left\{ C_{\psi}^{k} h : k \in {\mathbb{Z}_+}\right\}},\ \text{and}\ \  h_{N} \in \overline{\text{span} \left\{ C_{\psi}^{k} h : k \in {\mathbb{Z}_+}\right\}},
\end{align*}
Therefore,
\begin{align*}
 h_{m} \in \overline{\text{span} \left\{ C_{\psi}^{k} h : k \in {\mathbb{Z}_+}\right\}},
\end{align*}
for every $ m \in {\mathbb{Z}_+} $ which  in turn results in
\begin{align*}
 a_{n}\left(z - \dfrac{b}{1 - a}\right)^{n} \in \overline{\text{span} \left\{ C_{\psi}^{k} h : k \in {\mathbb{Z}_+}\right\}}
\end{align*}
for every $ n \in {\mathbb{Z}_+}$. Then, since  $a_n\neq 0$ for all $n\in {\mathbb{Z}_+}$, by Lemma~\ref{density}  the assertion of the  theorem follows.

\emph{Part (ii)}. We now proceed to show that $C_\psi$ can not be supercyclic. We set  $\psi(z)= az+b $ and argue in the  direction of contradiction, and assume that $C_\psi$  has a supercyclic vector $f\in \mathcal{F}_\varphi^p$. If $0<|a|<1$, then by
Lemma~\ref{lemfix}, $\psi$ fixes the point $b/(1-a)$. It follows that  $f(b/(1-a))\neq 0$. If not, the projective orbit contains only functions which vanishes at $b/(1-a)$.  Now for  each function $g$ in the projective orbit of $f$, there exists a sequence $(\lambda_{n_{k}})$ such that
\begin{align*} \lim_{k \to \infty}\|
\lambda_{n_{k}} C_\psi^{{n_{k}}} f - g\|_p= 0.
\end{align*}
Then we compute
\begin{align*}
g\Big(\frac{b}{1-a}\Big)= \lim_{k\to \infty}\lambda_{n_{k}} C_\psi^{n_{k}} f\Big(\frac{b}{1-a}\Big)= \lim_{k\to \infty}\lambda_k C_{\psi^{n_{k}}} f\Big(\frac{b}{1-a}\Big) = f\Big(\frac{b}{1-a}\Big)\lim_{k\to \infty}\lambda_{n_{k}},
\end{align*} where we used here  the fact that norm convergence implies pointwise convergence. Thus, for all  $z \in \CC$, applying the fact that  $a^{n_{k}}\to 0$ as $k\to \infty$ and \eqref{dependent}
\begin{align*}
g(z)= \lim_{k\to \infty}\lambda_{n_{k}} C_\psi^{n_{k}} f(z)= \lim_{k\to \infty}\lambda_{n_{k}} f \Big( a^{n_{k}} z+ \frac{b(1-a^{n_{k}})}{1-a}\Big) \quad \quad \quad \quad \quad \quad \quad  \quad \quad \quad   \\
 =\left[f\left(\frac{b}{1-a}\right)\right]^{-1} g\Big(\frac{b}{1-a}\Big)\lim_{k\to \infty} f \Big( a^{n_{k}} z+ \frac{b(1-a)}{1-a}\Big)\quad \quad \quad \quad \quad\quad  \\
 =  \left[f\left(\frac{b}{1-a}\right)\right]^{-1} g\Big(\frac{b}{1-a}\Big)f\Big(\frac{b}{1-a}\Big) = g\Big(\frac{b}{1-a}\Big),
\end{align*}
 showing that  only constant functions are in the projective orbit of $f$  resulting a contradiction.

 If $\psi(z)= az$ with $|a|=1$, then it fixes the origin. We may choose a univalent function  $g \in \mathcal{F}_\varphi^p$ such that $g(0)\neq 0$, and  pick a  subsequence $\psi^{n_k}$ such that  $\psi^{n_k}(z) \to a z$ as $k\to \infty.$ Then
 \begin{align*}
 g(z)=  \lim_{k\to \infty}\lambda_{n_{k}} C_\psi^{n_{k}} f(z)= \lim_{k\to \infty}\lambda_{n_{k}} f(a^{n_k} z)=  g(0) f(az).
 \end{align*}
 It follows that
 \begin{align}
 f(z)= \frac{f(0)}{g(0)} g\Big(\frac{z}{a}\Big)
 \end{align} is univalent as $f(0)\neq 0$.  Consequently, the projective orbits of $f$ contains only univalent functions which is again  a contradiction.
\subsection{Proof of Theorem~\ref{thm4}} We consider first  (a) of  part (i) and assume that  $p\neq q$.  We  plan to show that $C(\mathcal{F}_\varphi^p, \mathcal{F}_\varphi^q )$ is  connected. Aiming  to argue in the direction of  contradiction, suppose there exists an isolated point $C_\psi \in C(\mathcal{F}_\varphi^p, \mathcal{F}_\varphi^q )$. Since $p\neq q$, by Theorem~\ref{thm1},  $C_\psi$ is a compact operator and hence $\psi(z)= az+b, \ |a|<1.$ Then, choose two sequences of numbers $(a_n) $  with $|a_n| <1$ and $a_n\neq 0 $ for all $n$ and $b_n$ such that $a_n \to a$ and $b_n \to b $  as $n \to \infty.$  It follows that $\psi_n(z)=a_nz+b_n \to \ az+b= \psi(z)$. Then for any  $f\in \mathcal{F}_\varphi^p$, by Lemma~\ref{lem4}
\begin{align*}
 \|C_{\psi_n}f - C_{\psi}f\|_q \to 0 \ \ \text{as} \ \ n\to \infty.
\end{align*}
 Using this we find
  \begin{align*}
 \lim_{n\to \infty} \|C_{\psi_n}- C_\psi\| \leq  \lim_{n\to \infty} \sup_{\|f\|_{p\leq 1 }}\|C_{\psi_n}f - C_{\psi}f\|_q \quad \quad \quad \quad\quad \quad \quad \quad\\
 =  \sup_{\|f\|_{p\leq 1 }}  \lim_{n\to \infty}\|C_{\psi_n}f - C_{\psi}f\|_q= 0
 \end{align*} contradicting our assumption.

(b) Let $p= q$ and assume that $C_\psi \in C(\mathcal{F}_\varphi^p, \mathcal{F}_\varphi^p )$ is not compact. Then by Theorem~\ref{thm1}, $\psi(z)= az, |a|=1.$ We proceed to show that  $C_\psi$ is isolated. That is there exists a positive number c such that
\begin{align}
\label{isolated}
\|C_{\psi}- C_{\psi_1}\|\geq c
\end{align} for all  $C_{\psi_1} \in C(\mathcal{F}_\varphi^p, \mathcal{F}_\varphi^p )$ for which $\psi_1\neq \psi$.
We may first consider the forms   $\psi_1(z)= a_1z, \ \ |a_1|= 1$ and $a_1\neq a$. Since the polynomials are contained in $\mathcal{F}_\varphi^p$,
\begin{align}
\label{weakk}
\|C_{\psi}- C_{\psi_1}\| \geq  \sup_{n\geq 0}\|z^n\|_{p}^{-1}\big\| (C_\psi- C_{\psi_1})z^n \big\|_p \quad \quad  \quad \quad  \quad \quad  \quad \quad  \nonumber\\
= \sup_{n\geq 0}\|z^n\|_{p}^{-1}|a^n-a_1^n|\|z^n\|_{p}= \sup_{n\geq 0}|a^n-a_1^n| \geq 2.
\end{align}
On the other hand, if   $C_{\psi_1}$ is compact, then  $\psi_1= a_1z+b, \ \ \ |a_1| <1$ and using the unit norm sequence of functions  $f_{(w, R)}^*$ in \eqref{unitt}
\begin{align}
\label{weak}
\|C_{\psi}- C_{\psi_1}\| \geq   \sup_{w\in \CC} \big\| (C_\psi- C_{\psi_1})f_{(w, R)}^*  \big\|_{p}
\geq \sup_{w\in \CC}\bigg( \|C_\psi f_{(w, R)}^*\|_p-  \Big\|C_{\psi_1} f_{(w, R)}^*\Big\|_p\bigg)\nonumber\\
\gtrsim \sup_{w\in\CC}\bigg( 1- \Big\|C_{\psi_1} f_{(w, R)}^*\Big\|_{p}\bigg).
\end{align}
Now, $f_{(w, R)}^* \to 0$  weakly  as $|w|\to \infty$,  and as  $C_{\psi_1}$ is compact, we have
\begin{align*}
\Big\|C_{\psi_1} f_{(w, R)}^*\Big\|_p \to 0
\end{align*} as $|w|\to \infty$. This together  with \eqref{weak} for sufficiently big  $|w|$ gives
\begin{align}
\label{weaak}
\|C_{\psi}- C_{\psi_1}\| \gtrsim    1.
\end{align}
 From \eqref{weaak} and \eqref{weakk}, the  claim  in \eqref{isolated} follows.

(ii) If both operators are compact,  obviously the difference is also compact. Thus,  we shall prove the other implication, i.e. assuming  the difference is compact, we  need to verify that both composition operators are compact. We plan to argue in the direction of contradiction again,  and assume that one of them $C_{\psi_1}$  is not compact. It follows that $C_{\psi_2}$ is not compact either since for any $f\in \mathcal{F}_\varphi^p$
\begin{align*}
|C_{\psi_1}f(z)|^p  \lesssim  | (C_{\psi_1}-C_{\psi_2}) f(z)|^p+ |C_{\psi_2} f(z)|^p.
\end{align*}
Thus, we may set $\psi_1(z)= a_1z$ and  $\psi_2(z)= a_2z$ where $ a_1 \neq a_2$ and $|a_j|= 1, j= 1, 2$.  Since the unit norm sequence  $f_{(w, R)}^*$ is weakly convergent, compactness of the difference operator implies
\begin{align}
\label{compdiff}
\|(C_{\psi_1}-C_{\psi_2}) f_{(w, R)}^*\|_p \to 0 \ \ \text{as}\ \  |w| \to \infty.
\end{align}
On the other hand, we have a lower estimate
\begin{align*}
\|(C_{\psi_1}-C_{\psi_2}) f_{(w, R)}^* \|_p^p=\int_{\CC} |C_{\psi_1}f_{(w, R)}^*(z)-C_{\psi_2}f_{(w, R)}^*(z) |^p e^{-p\varphi(z)} dA(z)\nonumber\\
\geq \int_{D(z_0, \tau(z_0))} |C_{\psi_1}f_{(w, R)}^*(z)-C_{\psi_2}f_{(w, R)}^*(z) |^p e^{-p\varphi(z)} dA(z).
\end{align*} From  this  and applying  \eqref{pointwise} and \eqref{test} we estimate
 \begin{align*}
\|C_{\psi_1}-C_{\psi_2} f_{(w, R)}^*\|_p\gtrsim \tau(z_0)^{\frac{p}{2}}  |C_{\psi_1}f_{(w, R)}^*(z_0)-C_{\psi_2}f_{(w, R)}^*(z_0) | e^{-\varphi(z_0)} dA(z)\\
\simeq \frac{\tau(z_0)^{\frac{p}{2}}}{\tau(w)^{\frac{p}{2}} }  |f_{(w, R)}(\psi_1(z_0))-C_{\psi_2}f_{(w, R)}(\psi_2(z_0)) | e^{-\varphi(z_0)}.
\end{align*}
Setting $w= \psi_1(z_0)$ on the right-hand side above, applying  \eqref{test00} and \eqref{test0} and observing that
$\tau(\psi_1(z_0))= \tau(\psi_2(z_0))= \tau(z_0)$ leads to
\begin{align*}
\|(C_{\psi_1}-C_{\psi_2}) f_{(w, R)}^* \|_p\gtrsim \frac{\tau(z_0)^{\frac{p}{2}}}{\tau(\psi_1(z_0))^{\frac{p}{2}}} |f_{(\psi_1(z_0), R)}(\psi_1(z_0))-f_{(\psi_1(z_0), R)}(\psi_2(z_0)) | e^{-\varphi(z_0)}\nonumber\\
\geq \Big(|f_{(\psi_1(z_0), R)}(\psi_1(z_0))|-|f_{(\psi_1(z_0), R)}(\psi_2(z_0)) |\Big) e^{-\varphi(z_0)}\\
\gtrsim \bigg(e^{\varphi(z_0)}- e^{\varphi(z_0)} \bigg(\frac{\tau(z_0)}{|z_0||a_1-a_2|}\bigg)^{\frac{R^2}{2}}\bigg) e^{-\varphi(z_0)}= 1-\bigg(\frac{\tau(z_0)}{|z_0||a_1-a_2|}\bigg)^{\frac{R^2}{2}} = 1 \nonumber
\end{align*}  when $|z_0| \to \infty$ which  contradicts the fact in \eqref{compdiff}.

The statement in part (b) is an immediate consequence of part (a) and part (ii) of Theorem~\ref{thm2}.
\subsection{Proof of Theorem~\ref{thm5}}\label{essential}
Since
 the essential norm topology is weaker than the operator norm topology, each essentially isolated point is isolated.  Thus, we consider
     an operator $C_{\psi_1}\in  C(\mathcal{F}_p, \mathcal{F}_p)$, and assume that it is    isolated in the operator norm topology. Then  we plan to show that it is also  essentially isolated.  We may  let
 $\psi_1(z)= a_1z$ with $|a_1|= 1$ . It suffices to show that  for all bounded composition operators $C_{\psi_2} \in  C(\mathcal{F}_p, \mathcal{F}_p)$, the estimate
\begin{align*}
\|C_{\psi_1}-C_{\psi_2}\|_e \gtrsim 1
\end{align*} holds.  If $\psi_2$ is not compact either, then we  may set  $\psi_2(z)= a_2z$ where $ a_1 \neq a_2$ and $|a_2|= 1$. Then for  any compact operator $Q $ on $\mathcal{F}_p$ we have
 \begin{align*}
 \| (C_{\psi_1}- C_{\psi_2})-Q\| \geq \limsup_{| w| \rightarrow \infty } \|((C_{\psi_1}- C_{\psi_2})-Q) f_{(w, R)}^*\|_p \quad \quad \quad \quad \quad \quad\quad \quad \quad\\
  \geq \limsup_{\vert w\vert \rightarrow \infty }  \Vert (C_{\psi_1}- C_{\psi_2}) f_{(w, R)}^*\Vert_p-\Vert Q f_{(w, R)}^*\Vert_p = \limsup_{\vert w\vert \rightarrow \infty } \Vert (C_{\psi_1}- C_{\psi_2}) f_{(w, R)}^*\Vert_p.
 \end{align*}
Arguing as in the preceding  proof and setting $w= \psi_1(z_0)$ we find ,
\begin{align*}
\|C_{\psi_1}-C_{\psi_2} \|_e\gtrsim \limsup_{\vert z_0\vert \rightarrow \infty } \Big(|f_{(\psi_1(z_0), R)}(\psi_1(z_0))|-|f_{(\psi_1(z_0), R)}(\psi_2(z_0)) |\Big) e^{-\varphi(z_0)}\\
\gtrsim \limsup_{\vert z_0\vert \rightarrow \infty } \bigg(1-\bigg(\frac{\tau(z_0)}{|z_0||a_1-a_2|}\bigg)^{\frac{R^2}{2}}\bigg) = 1.
\end{align*}
On the other hand,  if $C_{\psi_2}$ is compact, we set  $\psi_2(z)= a_2z+b$ with $|a_2| <1$, and repeating the preceding  arguments
\begin{align*}
\|C_{\psi_1}-C_{\psi_2} \|_e\gtrsim \limsup_{\vert z_0\vert \rightarrow \infty } \Big(|f_{(\psi_1(z_0), R)}(\psi_1(z_0))|-|f_{(\psi_1(z_0), R)}(\psi_2(z_0)) |\Big) e^{-\varphi(z_0)}\\
\gtrsim  \limsup_{\vert z_0\vert \rightarrow \infty }\bigg(1-\bigg(\frac{\min\{\tau(z_0), \tau(a_2z_0+b_2)\}}{|z_0 (a_1-a_2)+b_2|}\bigg)^{\frac{R^2}{2}}\bigg) =1,
\end{align*}  and completes the proof.

\end{document}